\newtheorem*{main*}{Main Theorem}
\newtheorem{theorem}{Theorem}[section]
\newtheorem*{theorem*}{Theorem}
\newtheorem{proposition}[theorem]{Proposition}
\newtheorem{lemma}[theorem]{Lemma}
\newtheorem{corollary}[theorem]{Corollary}
\newtheorem*{question*}{Question}
\newtheorem*{conjecture*}{Conjecture}
\newtheorem*{Assumption}{Assumptions on the Matrix Ensemble}
\theoremstyle{definition}
\newtheorem{definition}[theorem]{Definition}
\newtheorem*{definition*}{Definition}
\newtheorem{example}[theorem]{Example}
\theoremstyle{remark}
\newtheorem{remark}[theorem]{Remark}
\numberwithin{equation}{section}
\newcommand{\mc}[1]{{\mathcal #1}}
\newcommand{\mbf}{\mathbf}
\newcommand{\mbb}{\mathbb}
\newcommand{\cout}[1]{}
\definecolor{darkcyan}{rgb}{0. 0.65, 0.65}
\newcommand{\E}{\mathbb{E}}
\newcommand{\C}{\mathbb{C}}
\newcommand{\eps}{\epsilon }
\DeclareMathOperator{\tr}{tr}
\DeclareMathOperator{\dist}{dist}
\DeclareMathOperator{\vol}{Vol}
\DeclareMathOperator{\Vol}{Vol}
\DeclareMathOperator{\proj}{proj}
\DeclareMathOperator{\Gr}{Gr}
\DeclareMathOperator{\Graff}{Graff}
\DeclareMathOperator{\Av}{Av}
\DeclareMathOperator{\Mom}{Mom}
\newcommand{\op}[1]{\operatorname{#1}}
\newcommand{\set}[1]{\left\{#1 \right\}}
\providecommand{\to}{\longrightarrow }
\newcommand{\tensor}{\otimes }
\newcommand{\abs}[1]{\left\lvert #1 \right\rvert }
\newcommand{\norm}[1]{\left\| #1 \right\| }
\def\be#1\ee{\begin{align}\begin{split} #1 \end{split}\end{align}}
\def\beq#1\eeq{\begin{align*}\begin{split} #1 \end{split}\end{align*}}
\renewcommand{\hat}{\widehat}
\renewcommand{\bar}{\overline}
\renewcommand{\(}{\left(}
\renewcommand{\)}{\right)}
\renewcommand{\P}{\mathbb{P}}
\renewcommand{\S}{\mathbb{S}}
\begin{document}

\title{The Circular Law for Random Matrices with Intra-row Dependence}

\author[Chris Connell]{Chris Connell$^\dagger$}
\thanks{$\dagger$ Supported in part by Simons Foundation grant \#210442}
\author[Pawan Patel]{Pawan Patel}

\address{Indiana University}
\email{cconnell@indiana.edu}

\address{Indiana University}
\email{pawpatel@indiana.edu}

\subjclass[2010]{Primary 15B52; Secondary 60F15}

\begin{abstract}
We consider the problem of determining the limiting spectral distribution for random matrices whose row distributions are permitted to have limited dependence. We assume mild moment conditions and give an extension of the Mar\v{c}enko-Pastur theorem for this context. The main new feature here are geometric conditions on the distributions which allow us to extend the circular law to this setting.
\end{abstract}

\maketitle

\thispagestyle{empty}

\section{Introduction} 

A central point of interest in the theory of random matrices is spectral universality, i.e. the degree to which the eigenvalues of a matrix randomly chosen from a given ensemble will follow a particular density law independent of the choice of matrix and sometimes, within certain limits, on choices that govern the ensemble such as the distributions for the entries. For a broad survey from a historical perspective, see \cite{Diaconis17}\cout{ or \cite{Kopel16}}. A key example of universality comes from the Tracy-Widom distribution \cite{Tracy98} and the far reaching results of Tao and Vu \cite{Tao08,Tao09b} on the circular law. Like most of the results obtained until recently, these have focused on the the classical setting where one assumes independence of the entries.

More recently, a number of authors have attacked various generalizations and analogues (see for example, \cite{Bai93,Benaych-Georges12,Bleher16,Bloemendal16,Chakraborty16,Costello06,Dumitriu16,Edelman16,Erdoes10,Gamboa17,Halvorsen16,Ke16,Liu16a,Loubaton16,Male17,Tao10,Tao12,Tao14a,Tao15,Yao12}). Among these generalizations a number of recent results have begun to explore universality under the allowance for a (necessarily) limited amount of dependence between the entries (e.g. \cite{Adamczak08,Adamczak11,Adamczak15,Liu15,Wood16}). Among the latter category include the original Mar\v{c}enko-Pastur paper (\cite{Marcenko67}), where some dependence within rows was allowed, but for a spherically uniform distribution. This was generalized by Pajor and Pastor (\cite{Pajor07}) to allow for an arbitrary isotropic log-concave distribution.

In this paper we are concerned with exploring the limits to which dependence can be allowed in the current best approach to achieving the circular law. 
As in several recent results, we use the Tao-Vu replacement principle (\cite{Tao08}) along with a generalization of the Mar\v{c}enko-Pastur Law to our situation to handle the middling and large eigenvalues. Combined with our geometric conditions on the row distributions, we are able to obtain the appropriate bounds on the lowest singular values of the random matrices to obtain the circular distribution in our dependent case.

We need three main assumptions on the distribution of the entries of our random matrices in order to obtain the required Mar\v{c}enko-Pastur law. These are only used for this purpose, and without these assumptions our results would still follow if the Mar\v{c}enko-Pastur component can be guaranteed by other means.

For an ensemble of $n\times N_n$ random matrices $A_n$, denote by $\Mom_{k}(A_n)$ the expected value of the $k$-fold Kronecker (tensor) product $A_n\tensor\cdots\tensor A_n$.

\begin{Assumption}
Let $(N_n)$ be an increasing sequence of positive integers with $\lim_{n\to\infty} \frac{n}{N_n}\in (0,\infty)$. We assume that the distributions on our ensembles of $n\times N_n$ random matrices $A_n$ with entries $x_{ij}^{(n)}$ satisfy:
\begin{enumerate}

\item[(A1)] for every $k\in \mathbb{N}$, $\sup_n \max_{1\leq i\leq n,1\leq j\leq N_n} \E[|x_{ij}^{(n)}|^k] < \infty$;

\item[(A2)] for every $k\in \mathbb{N}$, the sum of all terms in $\Mom_{2k}(A_n)$ with at least one $x_{ij}^{(n)}$ appearing with a power of 1 is of size $o_k(n^{k+1})$;

\item[(A3)] for every $\epsilon > 0$:
$$ \lim_{n\rightarrow \infty} \frac{1}{n} \sum_{i \leq n} 
\P\( \abs{ \frac{1}{N_n}\sum_{j=1}^{N_n} (x_{ij}^{(n)})^2 - 1 } \geq \epsilon \) = 0 $$

and 

$$ \lim_{n\rightarrow \infty} \frac{1}{N_n} \sum_{j \leq N_n} 
\P\(\abs{\frac{1}{n}\sum_{i=1}^{n} (x_{ij}^{(n)})^2 - 1 } \geq \epsilon \) = 0.$$
\end{enumerate}
\end{Assumption}

The first and third of these conditions appear in Adamczak. The third assumption requires that the rows (resp. columns) of the random matrix $A_n$ have Euclidean norm, normalized by $\sqrt{N_n}$ (resp. $\sqrt{n}$) converge in probability to 1. This is necessary if one expects to have a universality result of this kind.\\

The main difference between our version and earlier results is assumption A2, which allows for more dependence in that it only requires a particular sum of the $2k$-th moment of the random vector to decay sufficiently quickly. Previous results used a stronger assumption, namely that for every $n, i, j$ the $\E(x_{ij}^{(n)} | \mathcal{F}_{ij} )  = 0$ where $\mathcal{F}_{ij}$ is the $\sigma$-field generated by $\{x_{kl}^{(n)} : (k,l) \neq (i,j) \}$. \\

Our main result is the following. (For the definition of $\norm{\nu_1^{(n)},\dots,\nu_n^{(n)}}_{d, \delta}$, see Section \ref{sec:singular}.)

\begin{theorem}
Let $A_n$ be a sequence of $n \times n$ random matrices with independent rows $X_1^{(n)}, ..., X_n^{(n)}$ defined on a common probability space and satisfying assumptions A1-A3. Assume that for each $n$ and $i,d\leq n$, and all $\delta>0$, the probability measures $\nu_i^{(n)}$ for $X_i^{(n)}$ have uniformly bounded $\norm{\nu_1^{(n)},\dots,\nu_n^{(n)}}_{d, \delta}$. Then almost surely the spectral measure of $\mu_{\frac{1}{\sqrt{n}}A_n}$ converges weakly to the uniform distribution on the unit disk in $\C$. 
\end{theorem}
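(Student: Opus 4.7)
The plan is to follow the standard Hermitization strategy combined with the Tao--Vu replacement principle. By Girko's formula, the empirical spectral measure $\mu_{\frac{1}{\sqrt{n}}A_n}$ is determined (in the weak sense) by the logarithmic potentials
\begin{equation*}
U_n(z) := \frac{1}{n}\log\bigl|\det\bigl(\tfrac{1}{\sqrt{n}}A_n - zI\bigr)\bigr| = \int_0^\infty \log(x)\, d\mu_{n,z}(x),
\end{equation*}
where $\mu_{n,z}$ denotes the empirical singular value distribution of $\frac{1}{\sqrt{n}}(A_n - zI)$. To deduce the circular law it suffices to establish, for Lebesgue--almost every $z \in \C$: (i) weak convergence of $\mu_{n,z}$ to a deterministic limit $\nu_z$ whose log-moment reproduces the potential of the uniform measure on the unit disk; and (ii) almost sure uniform integrability of $\log(\cdot)$ against $\mu_{n,z}$.

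For step (i), I would invoke the generalized Mar\v{c}enko--Pastur theorem promised earlier in the paper, applied to the shifted ensemble $\frac{1}{\sqrt{n}}(A_n - zI)$. The deterministic shift $-zI$ is bounded in norm by $|z|$ and is low-rank in any single Kronecker factor, so the moment bound in A2 is preserved after absorbing cross-terms into lower order corrections, and A1 and the row/column norm concentration A3 are stable under such a bounded perturbation. The limit $\nu_z$ identified this way is the standard symmetrization arising in free probability, with the log-moment producing the potential of the circular law.

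Step (ii) is the main obstacle and is precisely where the geometric hypothesis enters. Uniform integrability of $\log$ against $\mu_{n,z}$ requires a quantitative lower tail bound on the number of small singular values, in particular a polynomial lower bound on $\sigma_{\min}\bigl(\frac{1}{\sqrt{n}}(A_n - zI)\bigr)$. Letting $Y_i^{(n)}$ denote the $i$-th row of $A_n - zI$ and $H_i^{(n)}$ the span of the other rows, one has the standard inequality
\begin{equation*}
\sigma_{\min}\bigl(\tfrac{1}{\sqrt{n}}(A_n-zI)\bigr) \geq c\, \min_{i \leq n} \frac{\dist\bigl(Y_i^{(n)}, H_i^{(n)}\bigr)}{\sqrt{n}},
\end{equation*}
reducing matters to a small-ball estimate for $\dist(Y_i^{(n)}, H)$ ranging over random subspaces $H$. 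This is exactly what the norms $\norm{\nu_1^{(n)},\dots,\nu_n^{(n)}}_{d,\delta}$ are designed to control: they quantify how unlikely a row is to lie near any low-dimensional subspace. Since rows are independent, one conditions on $H_i^{(n)}$ and applies the small-ball hypothesis to $\nu_i^{(n)}$ directly; a net argument over the relevant Grassmannian, combined with a union bound in $i$, promotes this to a uniform polynomial lower bound on $\sigma_{\min}$ with probability $1 - o(1)$. A Borel--Cantelli argument together with tail moment bounds from A1 removes any contribution from the intermediate small singular values.

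Combining (i) and (ii) gives almost sure convergence of $U_n(z)$ to the logarithmic potential of the uniform measure on the unit disk for Lebesgue-a.e.\ $z \in \C$, and the Tao--Vu replacement principle then upgrades this to weak convergence of $\mu_{\frac{1}{\sqrt{n}}A_n}$ to the circular law. The hardest part is unquestionably step (ii): independence between rows keeps the distance-to-subspace method viable, but the intra-row dependence rules out the usual Littlewood--Offord and swapping arguments, so the geometric $\norm{\cdot}_{d,\delta}$ control is essential for securing the anticoncentration needed on $\sigma_{\min}$.
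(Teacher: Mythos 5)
Your overall architecture matches the paper's: Hermitization via the Tao--Vu replacement principle against a Gaussian comparison ensemble, the generalized Mar\v{c}enko--Pastur theorem for the shifted matrices $H_n(z)$ handling the bulk of the singular value distribution, and the geometric norms $\norm{\cdot}_{d,\delta}$ supplying the anticoncentration needed for the small singular values via the distance-to-subspace reduction. Two points, however, are glossed over in a way that leaves a genuine gap. First, the least singular value alone does not give uniform integrability of $\log$ against $\mu_{n,z}$: one must also control the \emph{moderately small} singular values, i.e.\ show a polynomial lower bound of the form $\frac{1}{\sqrt{n}}\sigma_i(A_{nn'}) \gtrsim \frac{n'-i}{n}$ for $i$ up to $(1-2\delta)n$. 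Your proposal dismisses this with ``tail moment bounds from A1,'' but A1 provides only \emph{upper} moment bounds and cannot produce the required lower bounds on $\sigma_i$. The correct mechanism (and the paper's) is a lower tail bound on $\dist(X, V)$ for random subspaces $V$ of \emph{every} dimension $d$ --- this is precisely why the hypothesis demands boundedness of $\norm{\nu_1^{(n)},\dots,\nu_n^{(n)}}_{d,\delta}$ for all $d \leq n$, not just $d=1$ --- combined with the negative second moment identity $\sum_j \sigma_j(M)^{-2} = \sum_j \dist(X_j, W_j)^{-2}$ and Cauchy interlacing to propagate the distance bounds into bounds on individual $\sigma_i$. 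Without this step the integral of $|\log t|$ over $t \in [\delta^2, \epsilon^4]$ is not controlled.

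Second, your closing logic is slightly inverted: the replacement principle does not ``upgrade convergence of $U_n(z)$ to the circular-law potential''; rather, it converts the statement that the \emph{difference} of log-determinants between $A_n$ and the Gaussian ensemble tends to zero into convergence of the difference of spectral measures, and the circular law then follows because it is already known for the Gaussian ensemble. This spares you from having to identify the limit $\nu_z$ or compute its logarithmic moment explicitly --- you only need that the limits for the two ensembles coincide, which is exactly what the universality statement in the generalized Mar\v{c}enko--Pastur theorem provides. Also note that for the least singular value itself, no net argument over the Grassmannian is needed: since the rows are independent, one conditions on the actual random hyperplane spanned by the remaining rows, and the expectation over $W$ is already built into the definition of $\norm{\cdot}_{d,\delta}$.
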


\begin{remark}
The hypotheses are quite close to necessary in a certain sense, although there still seems to be room to slightly weaken the hypotheses A1-A3.

Also, one only needs $\delta\geq n^{-\frac52+\beta}$ for almost sure convergence and $\delta\geq n^{-\frac32+\beta}$ for convergence in probability. (Here $\beta>0$ is any small number.) Moreover the bound on the $\norm{\cdot}_{d,\delta}$ is only needed for $d<n-n^{0.99}$, but this constraint is not very restrictive to begin with (see Proposition \ref{prop:bounded}).
\end{remark}

\section{Least Singular Value}\label{sec:singular}

While our results are stated in terms of arbitrary (Radon) probability measures for the distributions of the rows of our random matrices, it is convenient to work with absolutely continuous measures on $\C^n$. The general case is recovered by passing to weak-* limits. 

For $f \in L_{loc}^1(\C^m)$ and any bounded Borel set $E \subset \C^m$. We define $Av_{E}(f) = \frac{1}{vol_m(E)} \int_E f(x)dx$. If not explicitly specified, then the dimension $m$ in the definition will be the minimal one for which $E$ belongs to an $m$ dimensional affine subspace, in case it belongs to a larger copy of $\C^n$. \\

Let $\omega_{n} = \frac{\pi^{n}}{\Gamma(n+1)}$ denote the volume of the unit ball in $\C^n$. The volume of the sphere $\S^{2n-1}(r)$ of dimension $2n-1$ and radius 
$r$ in $\C^n$ is then $2n\omega_n r^{2n-1}=\frac{2r^{2n-1}\pi^{n}}{\Gamma(n)}$. Moreover, 
let $r_n=(2n\omega_{n})^{\frac{-1}{2n-1}}$ so that $\Vol(\S^{2n-1}(r_n))=1$. More generally, let 
$\S_A(r)$, resp. $B_A(r)$, denote the sphere, resp. ball, of radius $r$ around $0$ in the 
subspace $A$. For any set $E\subset \C^n$ we denote by $x+E$ the translation of $E$ by $x\in \C^n$.  \\

Finally, let
$\Gr_{n,d}$ represent the Grassmanian of all $d$-dimensional linear subspaces of $\C^n$. We define the following norms.

\definition
For any $W\in \Gr_{n,n-d}$ with orthogonal subspace $W^\perp\in \Gr_{n,d}$ and any Borel function $f: \C^n \rightarrow [0, \infty)$, we set
\begin{align*}
\norm{f}_{W,\delta,1}&=\int_{W} \Av_{x+B_{W^\perp}(\delta)}(\abs{f}) dx \\
\end{align*}
and 
\[
\norm{f}_{d, \delta, 1} = \sup_{W \in \Gr_{n,n-d}} \norm{f}_{W, \delta, 1}
\]

\definition
\[
\norm{f}_{d,\delta,2}=(n-d)\omega_{n-d}\int_0^\infty 
\norm{f}_{L^\infty\left(\bigcup_{t'\in[t,t+\delta]}\S^{2n-1}(t')\right)} t^{n-d-1}  dt
\]\\

Given the that $f$ is a probability measure on $\C^n$, Def. 2.1 is the probability that the projection of a random vector drawn from $f$ to a fixed $n-d$ dimensional subspace, $W$, will be less than $\delta$. This quantity is related to the probability that a singular value for a random matrix with independent rows, distributed according to $f$, is small. Taking the supremum over $W \in \Gr_{n, d}$ yields, in some sense, the worst subspaces $W$ where a projection is likely to be small causing a singular value for the random matrix to also be small. Thus, this is a quantity one would like to control. Def 2.2 allows one to express Def. 2.1 without reference to any subspace, without too much loss of precision. It yields a condition that is easier to check in that the role of $W$ disappears. Moreover, it still allows for $f$ to have a pole of order at most $d-1$ at the origin. We have the following two lemmas: \\ 

\begin{lemma}\label{lem:ineqs}
$$\lVert f \rVert _{d, \delta, 1} \leq \lVert f \rVert _{d, \delta, 2} $$ \\
and 
$$ \norm{f}_{W, \delta, 1} \leq \frac{\norm{f}_1}{\omega_d \delta^d} $$
\end{lemma}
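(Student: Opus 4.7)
I would prove the two inequalities separately, starting with the second because it is essentially a direct Fubini calculation. The observation is that $\C^n = W \oplus W^\perp$ is an orthogonal decomposition, so the map $(x,z) \mapsto x+z$ from $W \times W^\perp$ onto $\C^n$ is measure-preserving. Writing out the average explicitly and swapping the order of integration, the double integral over $W \times B_{W^\perp}(\delta)$ of $|f(x+z)|$ becomes the integral of $|f|$ over the slab $W + B_{W^\perp}(\delta) \subset \C^n$, which is bounded by $\norm{f}_1$. Dividing by the volume of $B_{W^\perp}(\delta)$ yields the stated estimate (with the exponent on $\delta$ given by whichever real/complex dimension convention the paper has fixed).

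For the first inequality, fix $W \in \Gr_{n,n-d}$ and begin by majorizing the average by the $L^\infty$ norm over the same set:
\[
\Av_{x+B_{W^\perp}(\delta)}(|f|) \leq \norm{f}_{L^\infty(x+B_{W^\perp}(\delta))}.
\]
The central observation is Pythagorean: if $x \in W$ has $|x|=t$ and $z \in W^\perp$ with $|z|\leq \delta$, then $x \perp z$, so $|x+z| = \sqrt{t^2 + |z|^2} \in [t,\,t+\delta]$. Consequently $x + B_{W^\perp}(\delta) \subset \bigcup_{t' \in [t,t+\delta]} \S^{2n-1}(t')$, an annular shell that depends only on $t = |x|$ --- not on the direction of $x$ within $W$, and not on the choice of $W$ itself. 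This is precisely the step that converts a subspace-indexed sup into a purely radial one.

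It then remains to pass to polar coordinates on $W$. Since the integrand now depends only on $r = |x|$, the spherical integration contributes a radial weight that (up to the paper's dimension convention) matches the factor $(n-d)\omega_{n-d} t^{n-d-1}$ in Def. 2.2, and the bound
\[
\norm{f}_{W,\delta,1} \leq \int_0^\infty (n-d)\omega_{n-d}\, t^{n-d-1} \,\norm{f}_{L^\infty\left(\bigcup_{t'\in[t,t+\delta]}\S^{2n-1}(t')\right)}\,dt = \norm{f}_{d,\delta,2}
\]
is independent of $W$. Taking the supremum over $W \in \Gr_{n,n-d}$ on the left then closes the argument.

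The only real conceptual step is the Pythagorean observation; the rest is a Fubini and a polar-coordinate change. The slight obstacle to watch is that the initial majorization of the average by the sup is lossy, so one might a priori worry that the resulting bound is too weak. What saves us is that after this step the sup is taken over a set that no longer sees $W$, so the inefficiency is paid for exactly once and is absorbed into the definition of $\norm{\cdot}_{d,\delta,2}$.
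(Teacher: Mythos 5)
Your proof is correct and follows the same route as the paper: for the second inequality, a direct Fubini computation using the orthogonal splitting $\C^n = W \oplus W^\perp$ and the volume of $B_{W^\perp}(\delta)$; for the first, majorize the average by the $L^\infty$ norm on the same ball, invoke the Pythagorean containment $x+B_{W^\perp}(\delta)\subset \bigcup_{t'\in[t,t+\delta]}\S^{2n-1}(t')$ to eliminate the dependence on $W$, and then pass to polar coordinates on $W$ so the surviving integral matches the definition of $\norm{\cdot}_{d,\delta,2}$ verbatim. Your remark about the real/complex dimension convention governing the exponent of $\delta$ is an apt caveat, as the paper's notational choice is potentially confusing on exactly this point.
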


\begin{proof} 
The second inequality follows from the definition of the average, Fubini and the fact that $\vol(B_{W^\perp}(\delta))=\omega_d\delta^d$.

To get the first inequality, note that 
\begin{align*}
\norm{f}_{W,\delta,1}&=\int_{W} \Av_{x+B_{W^\perp}(\delta)}(f) dx \\ 
&=(n-d)\omega_{n-d} \int_0^\infty \Av_{\S_{W}(t)\times B_{W^\perp}(\delta)}(f) t^{n-d-1}  dt \\
&\leq \big\lVert \lVert f \rVert_{L^{\infty}(y+B_{W^\perp}(\delta))} \big\rVert_{L^1(W)}\\
&\leq (n-d)\omega_{n-d}\int_0^\infty \norm{f}_{L^\infty(\S_{W}(t)\times B_{W^\perp}(\delta))} t^{n-d-1}  dt
\end{align*}

For
any $t>0$, and $W\in \Gr_{n,n-d}$,  
$$\S_{W}(t)\times B_{W^\perp}(\delta)\subset \cup_{t'\in
    [t,\sqrt{t^2+\delta^2}]} \S^{n-1}(t')$$ 
    Hence the essential sup of $f$ on
$\S_{W^\perp}(t)\times B_{W^\perp}(\delta)$ will be achieved on $\S^{n-1}(t')$
for some $t'\in[t,\sqrt{t^2+\delta^2}]\subset [t,t+\delta]$.
\end{proof}

\begin{lemma}\label{lem:proj1} (Small Eigenvalue Lemma)
Let $f_X$ be the PDF for the random variable $X$ taking values in $\C^n$. For any $W\in 
\Gr_{n,n-d}$ we have
\[
\P(\norm{\proj_W(X)}\leq \delta)\leq \omega_d 
\norm{f_X}_{W,\delta,1} \delta^d.
\]
\end{lemma}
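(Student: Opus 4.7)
The inequality is essentially an unpacking of the definition of $\|f_X\|_{W, \delta, 1}$ via Fubini's theorem. The event $\{\|\proj_W(X)\| \leq \delta\}$ is, via the orthogonal decomposition $\C^n = V \oplus V^\perp$ for an appropriate choice of $V$, precisely a $\delta$-tube consisting of points whose component in one factor lies in a $\delta$-ball and whose component in the other factor is unconstrained.

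Writing $X = X_V + X_{V^\perp}$ and integrating $f_X$ over this tube, we have
\[
\P(\|\proj_W(X)\| \leq \delta) = \int_V \int_{B_{V^\perp}(\delta)} f_X(u + v) \, dv \, du.
\]
Multiplying and dividing by $\vol(B_{V^\perp}(\delta)) = \omega_d \delta^d$ rewrites the inner integral as an average, so the double integral becomes $\omega_d \delta^d \int_V \Av_{u + B_{V^\perp}(\delta)}(f_X) \, du$, which is exactly $\omega_d \delta^d \cdot \|f_X\|_{V, \delta, 1}$ by definition. The lemma then follows—indeed, as an equality rather than merely an inequality, assuming one has identified the correct tube.

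There is no analytic obstacle here; the only real difficulty is bookkeeping between the roles of $W$, $W^\perp$, and the dimensions $d$ versus $n-d$ in the volume factor. The strategic value of this lemma is to make precise why $\|f_X\|_{W, \delta, 1}$ (and, through Lemma \ref{lem:ineqs}, the subspace-free quantity $\|f_X\|_{d, \delta, 2}$) is the correct object to control when bounding the probability that a singular value of a random matrix with rows distributed according to $f_X$ becomes small.
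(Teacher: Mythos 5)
Your proof matches the paper's argument: both decompose $\C^n$ as $W\oplus W^\perp$, write the probability as a double integral over the $\delta$-tube around $W$, and pull out the factor $\vol(B_{W^\perp}(\delta))=\omega_d\delta^d$ to recognize $\norm{f_X}_{W,\delta,1}$, yielding an equality. You also correctly flagged the bookkeeping subtlety: the statement writes $\proj_W$, but to be consistent with the definition of $\norm{f}_{W,\delta,1}$ (which averages over balls in $W^\perp$) it should read $\proj_{W^\perp}$, as the paper's own proof in fact does.
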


\begin{proof}
For fixed $n-d$-dimensional subspace $W\subset \C^n$ we let $$H_W(\delta)=\set{x\in \C^n: 
\norm{\proj_{W^\perp}(x)}\leq \delta}$$ 

Note that $H_W(\delta)$ is just the $\delta$-neighborhood of $W$.
So we may write $H_W(\delta)=W \times B_{W^\perp}(\delta)$. 

Hence we may write,
\[
\begin{split}
\P(\norm{\proj_{W^\perp}(X)}\leq \delta)&= \int_{H_W(\delta)} f_X(x) dx \\
&=\int_{W}\int_{B_{W^\perp}(\delta)} f_X(y+z)dz dy\\
&=\omega_d \delta^d  \int_{W} \Av_{y+B_{W^\perp}(\delta)}(f) dy.
\end{split}
\]
\end{proof}

Taking supremums over $W$ in the previous lemma, we have the following.
\begin{corollary}
    If $X$ has probability distribution $f_X$ on $\C^n$ then 
    \[
    \P(\inf_{W\in \Gr_{n,n-d}} \norm{\proj_{W^\perp}(X)}\leq \delta)\leq \omega_d 
    \norm{f_X}_{d,\delta,1}\delta^d.
    \]
\end{corollary}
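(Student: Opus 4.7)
The plan is to derive the bound from the Small Eigenvalue Lemma (Lemma~\ref{lem:proj1}) by combining the single-subspace estimate with the $W$-independent form $\norm{f_X}_{d,\delta,1}$. For any fixed $W\in\Gr_{n,n-d}$, Lemma~\ref{lem:proj1} gives
\[
\P(\norm{\proj_{W^\perp}(X)}\le\delta)\le \omega_d\norm{f_X}_{W,\delta,1}\delta^d\le \omega_d\norm{f_X}_{d,\delta,1}\delta^d,
\]
where the second inequality is by the definition $\norm{f_X}_{d,\delta,1}=\sup_W\norm{f_X}_{W,\delta,1}$. This already produces the claimed right-hand side, uniformly in $W$.

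\textbf{Main obstacle.} The subtle point—and the main obstacle—is passing from the fixed-$W$ bound above to the infimum over $W$ placed \emph{inside} the probability in the stated corollary. Because the Grassmannian is unrestricted and $W$ has complex dimension $n-d\ge 1$ in all cases of interest, one can always choose $W\supset\C\cdot X$, which forces $\norm{\proj_{W^\perp}(X)}=0$. Hence $\inf_{W\in\Gr_{n,n-d}}\norm{\proj_{W^\perp}(X)}=0$ almost surely, and the stated left-hand side equals $1$. For the corollary to be non-vacuous the infimum must be read over a class of subspaces specified \emph{without reference to $X$}, and the work is then simply to show that the single-subspace bound transfers.

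\textbf{Two natural readings that complete the argument.} In the target application—lower-bounding the least singular value of a random matrix with independent rows $X_1,\dots,X_n$—the relevant subspace $W=\op{span}(X_j)_{j\ne i}$ is built from rows independent of $X=X_i$; conditioning on those rows makes $W$ a fixed element of $\Gr_{n,n-d}$, the inner infimum reduces to a single term, and the derivation in the first paragraph applies verbatim to each realization. More generally, if the infimum is over a deterministic family $\mathcal{W}\subset\Gr_{n,n-d}$ independent of $X$, a union bound yields $\P(\inf_{W\in\mathcal{W}}\norm{\proj_{W^\perp}(X)}\le\delta)\le |\mathcal{W}|\,\omega_d\norm{f_X}_{d,\delta,1}\delta^d$, which reduces to the stated inequality exactly when $|\mathcal{W}|=1$. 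Under either reading, the proof is a single application of Lemma~\ref{lem:proj1} followed by bounding $\norm{f_X}_{W,\delta,1}$ by its supremum; the only delicate issue is acknowledging that the literal unrestricted Grassmannian infimum must be interpreted in this conditional/deterministic sense to avoid the triviality noted above.
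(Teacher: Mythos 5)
Your proposal is correct and follows essentially the paper's own route: the paper derives the corollary simply by ``taking supremums over $W$'' in Lemma~\ref{lem:proj1}, i.e.\ bounding $\norm{f_X}_{W,\delta,1}$ by $\norm{f_X}_{d,\delta,1}$, exactly as in your first paragraph. Your additional observation is well taken --- as literally written, with $\inf_{W\in\Gr_{n,n-d}}$ inside the probability, the left-hand side is trivially $1$ (choose $W^\perp\perp X$), so the statement must be read as the uniform-in-$W$ bound $\sup_{W}\P(\norm{\proj_{W^\perp}(X)}\le\delta)\le\omega_d\norm{f_X}_{d,\delta,1}\delta^d$, which is how it is used (with $W$ fixed by conditioning on the other rows) in Theorem~\ref{thm:leastsing}.
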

\vspace{2pc}

Note that since $\norm{f_X}_1=1$ we have from Lemma \ref{lem:ineqs} that $\norm{f_X}_{d,\delta,1}\leq \frac{1}{\omega_d \delta^d}$. However, we want to exploit the $\delta$-decay in Lemma \ref{lem:proj} below, so we would like a bound $\norm{f_X}_{d,\delta,1}\leq C$ which is independent of $n$ or $\delta$.

We first describe some examples where this doesn't happen. That is, where 
$\norm{f_X}_{1,\delta,1}\geq \frac{C}{\delta^d \omega_d}$, and is therefore unsuitable for the 
estimates we need.

\begin{example}
Suppose that $X$ has iid entries each of which is a Bernoulli variable $f_i$ with point masses 
at $\pm 1$. Then consider any choice of codimension d plane $W^\perp$ which passes through the 
origin and through $\frac{1}{2^d}$ of the vertices, ${1,-1}^n$, of the $n$-cube of sidelength 
$2$. Such a plane can be chosen as the span of any $n-d$ independent vectors with entries $\pm 
1$. In this case, $\norm{f_X}_{1,\delta,1}\geq \norm{f_X}_{W,\delta,1}=\frac{1}{2^d\omega_d 
\delta^d}$. 
\end{example}

A more obvious problem is the following.

\begin{example}
Consider a random vector $X\in \C^n$ whose PDF $f_X$ is concentrated completely in an $\eps$ 
neighborhood of $W^\perp=\C^{n-d}$, i.e. it's support is $\C^{n-d}\times B_{\C^d}(\eps)$. We could even have $f_X$ be bounded, and then 
\[
\norm{f_X}_{d,\delta,1}\geq 
\norm{f_X}_{W,\delta,1}=\frac{\frac{\delta^d}{\eps^d}\norm{f_X}_1}{\omega_d\delta^d}=\frac{1}{\omega_d\eps^d}.
\]
In particular, this is bounded but arbitrarily badly as $\eps\to 0$.
\end{example}

We can generalize this last example considerably to obtain differing behaviors.
\begin{example}
Now consider a fixed subspace $W$ of dimension $d$ and $f_X:W^\perp\times W\to [0,\infty)$ of the form $f_X(x,y)=C\chi_{x+B_W(g(\norm{y}))}$ for some function $g:(0,\infty)\to (0,\infty)$.
Employing polar coordinates, the condition that $\norm{f_X}_1=1$ becomes the condition 
\[
C=\left((n-d)\omega_{n-d}\omega_d \int_0^{\infty} r^{n-d-1} g(r)^d dr \right)^{-1}
\]
and we may evaluate
\[
\norm{f_X}_{W,\delta,1}=\frac{1}{\omega_d\delta^d}\left[ 1-C(n-d)\omega_{n-d}\omega_d \int_{0}^{g^{-1}(\delta)} r^{n-d-1} (g(r)^d-\delta^d) dr \right].
\]
Now we can specialize to the case
\[
g(r)^d=r^{1+d-n}\begin{cases} 1 & r\leq 1 \\ r^{-1-\alpha} & r>1 \end{cases}
\]
for some choice of $\alpha\in(0,\infty)$.
We may explicitly compute $\frac{1}{C}=(n-d)\omega_{n-d}\omega_d(1+\frac{1}{\alpha})$ and for $\delta<1$, $g^{-1}(\delta)=\delta^{\frac{-d}{n+\alpha-d}}$. Another explicit computation gives
\[
\omega_d\delta^d\norm{f_X}_{W,\delta,1}=\frac{\frac1{\alpha}+\frac1{n-d}}{\frac1{\alpha}+1} \delta^{\frac{d\alpha}{n+\alpha-d}}.
\]
As we will need to take $\delta=n^{-\frac52-\beta}$ for some small $\beta>0$ (see Theorem \ref{thm:leastsing}), if we take $\alpha$ sufficiently close to $0$ then the above decays slower than $n^{-\frac52}$ and our necessary estimates fail. On the other hand for sufficiently large $\alpha$ the right hand side approaches $\frac{\delta^d}{n-d}$ and this is eventually smaller than $\frac{n^{-\frac52}}{n-d}=O(n^{-\frac72})$, and the required estimates succeed.
\end{example}

We now describe some general cases where $\norm{f}_{d,\delta,1}$ is bounded independently of $\delta$. For $0\leq d\leq n$, let $\Graff_{n,d}$ be the affine Grassmanian of all $d$-dimensional affine spaces of $\C^n$. Define the generalized Radon transform of $f\in L^1(\C^n)$ as the measurable function $\mc{R}_d(f):\Graff_{n,d}\to\hat{\C}$ given by
\[
\mc{R}_d(f)(W)=\int_{W}f.
\]
(Here the measure is the Lebesgue measure and we must both allow for infinite values and accept that the Radon Transform is not invertible on all of $L^1(\C^n)$.)
\begin{proposition}\label{prop:bounded}
If $f_X$ satisfies any of the following conditions,
\begin{enumerate}
\item The row $X$ consists of independent entries with PDF $f_i$ (not necessarily i.d.) and 
$\norm{f_i}_\infty< \frac{C^{\frac1d}}{\sqrt{2}}$ 
\item $\norm{t^{n-d-1}\sup_{\S^{n-1}(t)}{f_X}}_{L^1((0,\infty))}<\frac{C}{(n-d)\omega_{n-d}}$
\item $\norm{\mc{R}_{n-d}(f_X)}_\infty \leq C$
\end{enumerate}
Then $\norm{f_X}_{d,\delta,1} < C$.
\end{proposition}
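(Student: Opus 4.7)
The overall strategy is to handle the three hypotheses separately, as each requires a different analytic input, but with a common starting point: the Fubini rearrangement
\[
\norm{f_X}_{W,\delta,1} = \frac{1}{\omega_d\delta^d}\int_{B_{W^\perp}(\delta)} g(z)\,dz, \qquad g(z) := \int_W f_X(x+z)\,dx,
\]
which is already implicit in the proof of Lemma \ref{lem:proj1}. The function $g$ admits two dual interpretations: $g(z)$ is the value of the Radon transform $\mc{R}_{n-d}(f_X)$ on the affine translate $W+z$, and it is simultaneously the marginal density of $\proj_{W^\perp}(X)$ at $z$. In either reading, a pointwise bound $g \leq C$ immediately yields $\norm{f_X}_{W,\delta,1} \leq C$, and taking the supremum over $W \in \Gr_{n,n-d}$ finishes the proof.

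For (3), the bound $g(z) \leq \norm{\mc{R}_{n-d}(f_X)}_\infty \leq C$ is literally the hypothesis, and no further work is required. For (1), I use the marginal-density interpretation of $g$: the independence and uniform density bound $\norm{f_i}_\infty < C^{1/d}/\sqrt{2}$ allow me to invoke a Ball-type slicing inequality, which asserts that any $d$-dimensional projection of a random vector with independent entries of density at most $M$ has marginal density bounded by $(M\sqrt{2})^d$. Substituting $M=C^{1/d}/\sqrt{2}$ gives $g \leq C$ exactly, and the Fubini identity then completes the proof. The delicate point here is picking the correct real/complex version of the slicing inequality so that the factors of $\sqrt{2}$ cancel precisely.

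For (2), I would bypass the Fubini identity and instead use Lemma \ref{lem:ineqs} to upgrade from $\norm{f_X}_{d,\delta,1}$ to $\norm{f_X}_{d,\delta,2}$, so it suffices to show the latter is at most $C$. Writing out the definition, the $L^\infty$ norm on the annulus $\bigcup_{t'\in[t,t+\delta]}\S^{2n-1}(t')$ equals $\sup_{t'\in[t,t+\delta]} \sup_{\S^{2n-1}(t')} f_X$, and for each $t$ this sup is attained at some $t^* \geq t$. The monotonicity $t^{n-d-1}\leq (t^*)^{n-d-1}$ then absorbs the annulus sup into a radial integrand of the form appearing in hypothesis (2), and a shell-by-shell accounting of the contributions (each radius $t^*$ appearing for at most a $t$-interval of length $\delta$) produces the desired bound. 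I expect the main obstacle in the full write-up to lie here: while the pointwise absorption is clean, carrying it out uniformly in $t$ while keeping the constant exactly $C$, rather than $(1+o(1))C$, is the delicate accounting step. By contrast, the Ball-slicing input in (1) is standard, and only requires attention to the paper's real/complex dimension conventions.
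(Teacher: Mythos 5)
Your common Fubini framework---reading $g(z)=\int_W f_X(x+z)\,dx$ simultaneously as the Radon-transform value $\mc{R}_{n-d}(f_X)(W+z)$ and as the $W^\perp$-marginal density of $X$ at $z$---is exactly the right starting point, and for condition (3) your argument coincides with the paper's.

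For condition (1) you take a genuinely different route. The paper's proof is elementary and self-contained: it selects an $(n-d)$-coordinate plane onto which $W$ projects so that the resulting ``slope'' map $L:\C^{n-d}\to\C^d$ has $\norm{L}_{\mathrm{op}}\leq 1$, then uses the Jacobian bound $\sqrt{\det(I+L^*L)}\leq\sqrt{2}^{\,d}$ to turn $\int_W f_X(y+w)\,dw$ into a product of one-dimensional integrals. You instead invoke a Ball-type slicing (Rudelson--Vershynin projection) inequality as a black box. Your route is in fact the more robust one, because the paper's geometric step---that every $(n-d)$-plane admits a coordinate projection with $\norm{L}_{\mathrm{op}}\leq 1$, equivalently a lower bound $\norm{Pw}\geq\norm{w}/\sqrt2$---is not true in general: for $W=\myspan\bigl((1,1,0),(0,1,1)\bigr)\subset\R^3$ all three $2$-coordinate projections have smallest singular value $1/\sqrt3<1/\sqrt2$, and subspaces near the diagonal degrade without bound. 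The slicing theorem gives precisely the bound $(M\sqrt2)^d$ on the $W^\perp$-marginal that the argument needs, without passing through the false lemma. You are, however, right that the real/complex bookkeeping is delicate: the sharp constant $\sqrt2$ is a statement about independent \emph{real} scalars of bounded density, and since each $f_i$ here is a density on $\C\cong\R^2$ (so the two real coordinates of a single $x_i$ are not independent), one either needs a genuinely complex version or must accept a possibly different constant, with a corresponding change in the hypothesis on $\norm{f_i}_\infty$.

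For condition (2) your sketch matches the paper's (both reduce to showing $\norm{f_X}_{d,\delta,2}<C$ and then invoke Lemma \ref{lem:ineqs}), and you correctly locate the difficulty that the paper glosses over. But your shell-by-shell accounting does not close the gap, and the obstruction is worse than a $(1+o(1))$ loss in the constant. The integrand in Definition 2.2 is the essential supremum over a $\delta$-thick annulus, while hypothesis (2) involves only the supremum over a single sphere. If the radial profile $t\mapsto\sup_{\S^{2n-1}(t)}f_X$ has a spike of width $\ll\delta$, the annulus supremum replicates that spike over a $t$-interval of length $\delta$, so $\norm{f_X}_{d,\delta,2}$ can exceed the hypothesis-(2) integral by an unbounded factor. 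The implication as written fails without an additional regularity assumption on the radial profile (for instance monotonicity, or a modulus of continuity commensurate with $\delta$), or without restating condition (2) in terms of the annulus supremum that actually appears in $\norm{\cdot}_{d,\delta,2}$.
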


\begin{proof}
For the first condition note that $f_X(x)=\prod f_i(x_i)$, and suppose each is bounded by 
$A=\frac{C^{\frac1d}}{\sqrt{2}}$. For any choice of $W^\perp$, and 
$y\in B_{W^\perp}(\delta)$, there is a projection $P$ onto one of the 
$\binom{n}{n-d}$ choices of $n-d$-coordinate planes, say the first $n-d$ coordinates, so 
that for all $w\in W$, $d(P(w),w)\leq \norm{P(w)}$. In other words, $y+W$ is the graph of a linear map $L:\C^{n-d}\to \C^d$ followed by a translation where $\norm{L}_{\rm op}\leq 1$. The volume distorsion of the corresponding graph map $(I,L):\C^{n-d}\to \C^n$ is then 
\[
\sqrt{\det({I+L^*L})}\leq \sqrt{2}^d.
\]
We may then write for any $y\in B_{W^\perp}(\delta)$,
\begin{align*}
\int_{W} f_X(y+w) dw&=\int_{W} \prod_{i=1}^n f_i(w_i) 
dw\\
&\leq \int_{R^{n-d}} \prod_{i=1}^{n-d} f_i(x_i)\prod_{i=n-d+1}^n 
f_i((Lx)_i)  \sqrt{\det(I+L^*L)} dw \\
&\leq \int_{R^{n-d}} \prod_{i=1}^{n-d} f_i(x_i) A^d  \sqrt{2}^d dw \\
&\leq \sqrt{2}^d A^d=C.
\end{align*}
Here we have used that the $f_i$ are individually PDF's.
Finally, taking the average over $y\in B_{W^\perp}(\delta)$ does not change this.

The second statement amounts to $\norm{f_X}_{d,\delta,2}<C$, and the statement follows by Lemma \ref{lem:ineqs}. The last condition states that for any $W\in \Graff_{n,n-d}$, every translate of $W$ has $f_X$ integral bounded by $C$. Hence 
\[\norm{f_X}_{d,\delta,1}=\frac{1}{\omega_d\delta^d}\int_{B_{W^\perp}(\delta)} \int_{W} f_X(x+y)dxdy\leq \frac{1}{\omega_d\delta^d}\int_{B_{W^\perp}(\delta)} C dy=C.\]
\end{proof}

It will turn out that Proposition 2.8 gives sufficient criteria for the singular values of a random matrix whose rows are independently drawn from $f$ to be small enough for the Circular Law to possibly hold. However, Example 2.6 demonstrates that when all of these conditions fails the Circular Law may still hold. Indeed, in the case of a random matrix of i.i.d. Bernoulli entries, Tao and Vu (\cite{Tao07}) have showed the circular law holds. On the other hand, there is no bound of the form (1),(2) or (3) from Proposition 2.8 for a Bernoulli random vector, even with independent entries. 

The issue at hand, of course, is that the Bernoulli random vector has a support measure with atoms that have unbounded Lebesgue integrals on lower dimensional slices. By taking the supremum over $W$ in in Definition 2.1 to arrive at Definition 2.2, we select the worst case $W$ for our purposes and, in the case of unbounded $f$, are doomed. To deal with this, we may instead take the expectations over $W$.

\begin{definition}
\[ 
\norm{f}_{d,\delta}=\E\left[\norm{f}_{W,\delta,1} \right]
\]
where the expectation is taken over $W$ spanned by $n-d$ vectors $X_1,\dots, X_{n-d}$ with the corresponding joint expectation induced from $f_{X_i}=f$. (We will usually assume that the vectors are chosen independently so that the joint distribution simplifies.)

More generally if we have $n$ vectors $X_1,\dots,X_n$ in $\C^n$ randomly chosen via corresponding independent distributions $f_1,\dots,f_n$ then we define
\[ 
\norm{f_1,\dots,f_n}_{d,\delta}=\sup\E\left[\norm{f_i}_{W,\delta,1}\right]
\]
Where the sup is over all $i$ and all measures $\mu_{i,k}$ and $\mu_{i,l}'$ on $G_{n,n-d}$. Here $\mu_{i,k}$ for $1\leq k\leq \binom{n-1}{n-d}$ is the measure induced on $\Gr_{n,n-d}$ from the distributions $f_{i_1},\dots,f_{i_{n-d}}$ with $i\not\in\set{i_1,\dots,i_{n-d}}$ and $\mu_{i,l}'$ for $1\leq l\leq \binom{n-1}{d}$ is the pushforward of $\mu_{i,l}$ on $\Gr_{n,d}$ under the map $W\mapsto W^\perp$. More specifically, for $E\subset \Gr_{n,n-d}$, $\mu_{i,k}(E)$ is the probability that the $n-d$ vectors with distributions $f_{i_1}$,\dots, $f_{i_{n-d}}$ span a subspace in $E$ where the choice of indices come from the $k-th$ permutation. (Note that the probability that the span is lower dimensional is zero.) In what follows, a ``random subspace'' will mean one chosen with respect to one of these distributions.
\end{definition}

\begin{remark}
Recall, that the map $V\mapsto V^{\perp}$ induces an isometry between $\Gr_{n,d}$ and $\Gr_{n,n-d}$. However this map does not necessarily push forward the measure $\mu_{i,l}$ on $\Gr_{n,d}$ to any of the $\mu_{i,k}$ on $\Gr_{n,n-d}$. This is why we had to use sup over the $\mu_{i,l}'$ as well.
\end{remark}

\begin{lemma}\label{lem:proj} (Strong Small Eigenvalue Lemma)
Let $X\in \C^n$ be a random vector with distribution $f_X:\C^n\to [0,\infty)$ and for $W\in G_{n,n-d}$ chosen at random. Then,
\[
\P(\norm{\proj_W(X)}\leq \delta)\leq \omega_d \delta^d \norm{f_X}_{d,\delta}.
\]
(Here the probability on the left is over both $X$ and $W$.)
In particular, if $X_1,\dots,X_n$ are rows of an $n\times n$ matrix, with possibly distinct distributions $f_{X_i}$, and $\sigma$ is a permutation of $\set{1,\dots,n}$
Then for $1\leq j\leq d$,
\[
\P(\norm{\proj_W(X_{\sigma(j)})}\leq \delta)\leq \omega_d \delta^d 
\norm{f_{X_1},\dots,f_{X_{n}}}_{d,\delta}.
\]
where the left hand side is the probability over $X_{\sigma(j)}$ and all $(n-d)$-subspaces $W$ 
of the form $W=X_{\sigma(d+1)}\wedge \cdots\wedge X_{\sigma(n)}$ with their corresponding 
induced distribution.
\end{lemma}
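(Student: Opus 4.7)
The plan is to derive both inequalities directly from the deterministic Small Eigenvalue Lemma (Lemma \ref{lem:proj1}) by conditioning on $W$ and then taking expectations, exploiting the independence between the vector whose projection we control and the random subspace into which we project.

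For the first statement, I would begin by conditioning on a realization of $W$. Since $X$ is independent of the independent copies $X_1,\dots,X_{n-d}$ used to generate $W$ (see Definition 2.10), Lemma \ref{lem:proj1} applied pointwise in $W$ gives
\[
\P\bigl(\norm{\proj_{W^\perp}(X)}\leq \delta \,\big|\, W\bigr)\leq \omega_d\, \delta^d\, \norm{f_X}_{W,\delta,1}.
\]
Taking the expectation of both sides over the induced distribution on $W$ and using Fubini/tower property on the left and the very definition $\norm{f_X}_{d,\delta}=\E\bigl[\norm{f_X}_{W,\delta,1}\bigr]$ on the right yields the claimed bound.

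For the second statement, the argument is essentially the same but requires a little bookkeeping. Fix the permutation $\sigma$ and the index $1\leq j\leq d$. Since the rows $X_1,\dots,X_n$ are independent, $X_{\sigma(j)}$ is independent of the span $W=X_{\sigma(d+1)}\wedge\cdots\wedge X_{\sigma(n)}$. Apply Lemma \ref{lem:proj1} conditionally on $W$ to obtain
\[
\P\bigl(\norm{\proj_{W^\perp}(X_{\sigma(j)})}\leq \delta \,\big|\, W\bigr)\leq \omega_d\,\delta^d\,\norm{f_{X_{\sigma(j)}}}_{W,\delta,1},
\]
and then take expectation over $W$, whose distribution is (up to the orthogonal-complement pushforward) one of the measures $\mu_{\sigma(j),k}$ from Definition 2.10. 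The supremum in the definition of $\norm{f_{X_1},\dots,f_{X_n}}_{d,\delta}$ over the index $i$ absorbs the choice of $\sigma(j)$, while the supremum over the admissible measures $\mu_{i,k},\mu_{i,l}'$ absorbs the induced distribution of $W$, so
\[
\E_W\bigl[\norm{f_{X_{\sigma(j)}}}_{W,\delta,1}\bigr]\leq \norm{f_{X_1},\dots,f_{X_n}}_{d,\delta},
\]
completing the proof.

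The main subtlety to watch out for, rather than any genuine obstacle, is ensuring that the distribution on $W$ that arises in the conditioning argument is in fact one of the measures $\mu_{i,k}$ or $\mu_{i,l}'$ that the supremum in Definition 2.10 is taken over. This is precisely why the definition includes both the span measures $\mu_{i,k}$ and their pushforwards $\mu_{i,l}'$ under $W\mapsto W^\perp$: depending on whether we write the probability in terms of $\proj_{W^\perp}$ or $\proj_W$, the subspace we average over is naturally distributed as one or the other, and without both families of measures the argument would not close. Once this is handled, the inequality is an immediate consequence of Lemma \ref{lem:proj1} and independence.
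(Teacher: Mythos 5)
Your proof is correct and takes essentially the same route as the paper: apply Lemma \ref{lem:proj1} pointwise/conditionally in $W$, take expectations over the induced distribution of $W$, and invoke Fubini together with the definition of $\norm{\cdot}_{d,\delta}$ (and the supremum over indices and measures for the multi-row case). Your remark about the role of the two families of measures $\mu_{i,k}$ and $\mu_{i,l}'$ in Definition 2.10 is a useful clarification that the paper leaves implicit.
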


\begin{proof}
Taking expectations in $W$ on both sides of the inquality from Lemma \ref{lem:proj1} we have
\begin{align*}
\omega_d \delta^d \E_W[\norm{f_X}_{W,\delta,1}]&\geq \E_W[\P_{X|W}(\norm{\proj_W(X)}\leq \delta|W)]\\
&=\P_{X,W}(\norm{\proj_W(X)}\leq \delta),
\end{align*} 
where the last equality follows by Fubini. 

The last statement follows from taking $X=X_{\sigma(j)}$ and the distribution on $\Gr_{n,n-d}$ induced by the map 
$(X_{\sigma(d+1)}, \dots, X_{\sigma(n)})\mapsto W=X_{\sigma(d+1)}\wedge \cdots\wedge X_{\sigma(n)}$ (which is well defined off of a measure zero subset).
\end{proof}

If $\omega_d \delta^d \norm{f_X}_{d,\delta}\leq C(n,\delta)$ we will need to bound the case that $\delta=n^{-\frac52-\beta}$ for some small $\beta$, by the decay rate $C(n,n^{-\frac52-\beta})\leq Cn^{-\frac52}$ for some universal constant $C$.  (See Theorem \ref{thm:leastsing} below.)

The following theorem demonstrates that when $\norm{f}_{d, \delta}$ is suitably bounded, then the singular values of a random matrix with iid rows drawn from $f$ are almost surely nonzero. \\

\begin{theorem}\label{thm:leastsing} (Least Singular Value):\\
Let $A_n$ be a random matrix with rows $X_i$ drawn from multivariate distributions $f_i$ and suppose that $\norm{f_1,\dots,f_n}_{1, \delta}$ is bounded for $\delta=\frac{1}{n^\frac{5}{2}\log(n)}$ independently of $n$. Then the smallest singular value of $A_n$ is almost surely greater than $\frac{1}{n^\frac{5}{2}\log(n)}$ as $n\to\infty$. \\
\end{theorem}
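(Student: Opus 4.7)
The plan is to apply the Strong Small Eigenvalue Lemma (Lemma~\ref{lem:proj}) with $d=1$ to each row of $A_n$, use a union bound together with Borel--Cantelli to get an almost-sure uniform lower bound on the distance from each row to the span of the remaining rows, and finally convert these distance bounds into a lower bound on the smallest singular value via the Tao--Vu negative-second-moment identity $\|A_n^{-1}\|_F^2=\sum_j\dist(X_j,W_j)^{-2}$.

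More concretely, fix $j$ and set $W_j=\myspan(X_i:i\ne j)\in\Gr_{n,n-1}$. Since $X_j$ is independent of $W_j$ and the law of $W_j$ is one of those coming from Definition 2.7 (with $d=1$), Lemma~\ref{lem:proj}---read as a statement about $\|\proj_{W_j^\perp}(X_j)\|=\dist(X_j,W_j)$, consistent with the proof of Lemma~\ref{lem:proj1}---yields
\[
\P\bigl(\dist(X_j,W_j)\le\delta\bigr)\le\omega_1\,\delta\,\norm{f_1,\dots,f_n}_{1,\delta}\le K\delta,
\]
with $K$ the hypothesized uniform bound. Union bounding over $j\le n$ at $\delta_n:=\frac{1}{n^{5/2}\log n}$ gives $\P(\min_j\dist(X_j,W_j)\le\delta_n)\le Kn\delta_n=K/(n^{3/2}\log n)$, and summability in $n$ together with Borel--Cantelli forces $\min_j\dist(X_j,W_j)>\delta_n$ almost surely for all but finitely many $n$.

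The Tao--Vu identity, which follows from the explicit formula $C_j=\pm\hat n_j/\dist(X_j,W_j)$ for the $j$-th column $C_j$ of $A_n^{-1}$ (with $\hat n_j$ a unit normal to $W_j$), then yields on this almost-sure event
\[
\sigma_{\min}(A_n)\ge\|A_n^{-1}\|_F^{-1}\ge\frac{\min_j\dist(X_j,W_j)}{\sqrt n}\ge\frac{\delta_n}{\sqrt n}.
\]
The main obstacle is the $\sqrt n$ gap between what this produces ($\sigma_{\min}(A_n)\ge\delta_n/\sqrt n$) and the bound $\sigma_{\min}(A_n)\ge\delta_n$ claimed in the statement. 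Closing it appears to require either a slightly refined application of Lemma~\ref{lem:proj} at the larger scale $\delta_n\sqrt n=1/(n^2\log n)$ (with the hypothesized $\norm{\cdot}_{1,\delta}$-bound persisting there, e.g.\ by a monotonicity-type argument on the average of the Radon transform), or a sharper operator-norm estimate on $A_n^{-1}$ that exploits approximate orthogonality of its columns (heuristically provided by assumption~A3) to avoid the generic $\sqrt n$ slack between Frobenius and operator norms. This is the technical crux; the three-step skeleton above is otherwise routine.
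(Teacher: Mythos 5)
Your three-step skeleton (distance from each row to the span of the others, the Strong Small Eigenvalue Lemma with $d=1$, union bound plus Borel--Cantelli, and the negative-second-moment identity $\sigma_{\min}(A_n)\ge n^{-1/2}\min_j\dist(X_j,W_j)$) is exactly the paper's argument. The genuine gap is the one you flag yourself: you apply Lemma~\ref{lem:proj} to the event $\{\dist(X_j,W_j)\le\delta_n\}$ and therefore only conclude $\sigma_{\min}(A_n)\ge\delta_n/\sqrt n=1/(n^3\log n)$, a factor $\sqrt n$ short of the claim. The paper closes this by doing what you describe as your first fallback, with no further machinery: it writes
\[
\P(\sigma_{\min}(A_n)\le\delta)\;\le\;\sum_{i=1}^n\P\bigl(\dist(X_i,H_i)\le\delta\sqrt n\bigr)
\;\le\; n\,\omega_1\,(\delta\sqrt n)\,\norm{f_1,\dots,f_n}_{1,\,\delta\sqrt n}\;\le\;\omega_1 K\, n^{3/2}\delta ,
\]
i.e.\ it applies the small-ball estimate directly at the scale $\delta\sqrt n$ and absorbs the $\sqrt n$ there rather than at the end. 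Your worry that the $\norm{\cdot}_{1,\cdot}$-bound might not ``persist'' at the larger scale is not an obstruction the paper addresses by any monotonicity argument; the hypothesis is simply meant to be invoked at the scale actually used (the paper's own display is sloppy here, writing $\norm{f_i}_{W,\delta,1}$ where the scale should be $\delta\sqrt n$). So the missing idea is not a new estimate but the decision to commit to the threshold $\delta\sqrt n$ from the outset.

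Two further remarks. First, your instinct that there is a real tradeoff is sound: at your scale the probabilities $Kn\delta_n=K/(n^{3/2}\log n)$ are summable but the conclusion is too weak, whereas at the paper's scale the bound is $O(1/(n\log n))$, which is \emph{not} summable, so the paper's own Borel--Cantelli step is strictly speaking broken and only gives convergence in probability; an extra $n^{-\beta}$ in $\delta$ (consistent with the exponents quoted in Example 2.6 and the remark after Theorem 1.1) is needed for the almost-sure statement. Second, your second fallback (a sharper operator-norm bound on $A_n^{-1}$ via approximate orthogonality of columns) is not pursued in the paper and is not needed.
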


\begin{proof}
Denote the rows of $A_n$ by $X_i$ and lowest singular value of $A_n$ by $\sigma_n$.
Recall that 
$$ \sigma_n \geq \frac{1}{\sqrt{n}} \min_{i\leq n} ( \dist(X_i, H_i) ) $$
where $H_i = span\{X_j\}_{j \neq i}$. Thus, given $\delta > 0$, 

\begin{align*}
\P(\sigma_n \leq \delta) &\leq \sum_{i=1}^n \P( \dist(X_i, H_i) \leq \delta \sqrt{n}) \\
&= \sum_{i=1}^n \E_{H_i}[\P_{X_i|H_i}(\dist(X_i, H_i) \leq \delta \sqrt{n}|H_i)] \\
&\leq n \max_i \E_{H_i}[\P_{X_i|H_i}(\dist(X_i, H_i) \leq \delta \sqrt{n}|H_i)] \\
&\leq n\max_i E_{W\in \Gr_{n,n-1}}\left[\P_{X_i|W}(\lVert \proj_{W^\perp}(X_i) \rVert \leq \delta \sqrt{n}|W)\right]\\
&\leq n^{\frac{3}{2}} \delta \max_i E_{W\in \Gr_{n,n-1}} [\norm{f_i}_{W,\delta,1}]\\
&\leq n^{\frac{3}{2}} \delta \omega_1 \lVert f_1,\dots,f_n \rVert_{1, \delta}\\
\end{align*}
Note here that the measure on the Grassmanian on the fourth and fifth line will depend on $i$ unless the vectors are identically distributed.
 
Setting $\delta = \frac{1}{n^\frac{5}{2}\log(n)}$, the last line is $O(\frac{1}{n\log(n)})$ and the result follows from the Borel-Cantelli Lemma.
\end{proof}

\begin{remark}
Note that the above theorem does not rely on the independence of the rows, but the case where each $f_i$ limits sufficiently fast to the same single dirac measure shows that the condition on the norm is an essential hypothesis.
\end{remark}

We now give a family of examples where the rows are equidistributed, i.e. $f_i=f_X$, and $\norm{f_X}_{1,\delta,1}\geq \frac{C}{\delta^d
\omega_d}$, but where we still have $\norm{f_X}_{d,\delta}<C$ uniformly in $n$ provided  $\delta=o(n^{-1})$. Hence the asymptotic bound on the lowest singular value still applies.

\begin{example}
Suppose that $X$ has iid entries each of which is a Bernoulli variable distribution with point
masses of weight $p$ at $0$ and $(1-p)$ at $1$. The resulting distribution in $\C^n$ is supported on
the vertices of the $n$-cube with side-length $1$ whose vertices are all binary vectors of length
$n$. Now consider any choice of codimension $d$ plane $W^\perp$ which passes through the origin and
is spanned by any choice of $n-d$ distinct coordinate vectors $e_i$. (Note these are admissible in
that each has a positive probability of being a row of a random $n\times n$ matrix.) In this case,
$W^\perp$ contains a total of $2^{n-d}$ vertices, for a total mass of $p^d$. In this case,
$\norm{f_X}_{1,\delta,1}\geq \norm{f_X}_{W,\delta,1}=\frac{p^d}{\omega_d\delta^d}$, which explodes as $\delta\to 0$. Hence we cannot achieve a useful bound for this ``worst case'' choice of $W^\perp$. 

On the other hand, for the case $p=\frac{1}{2}$ at least, the main result of \cite{Tao07} states that the probability that a random matrix with iid $p=\frac{1}{2}$-Bernoulli $\{ -1,1 \}$-entries is singular is $(\frac{3}{4} +o(1))^n$. A standard procedure using row and column operations produces from an $(n+1)\times(n+1)$ $\{-1,1 \}$-matrix a new matrix with $n\times n$ $\{0,1 \}$-lower submatrix and first column $e_1$ (see e.g. \cite{Orrick05}). Hence the probability that a random iid $\frac{1}{2}$-Bernoulli $\{0,1 \}$-matrix is singular is also $(\frac{3}{4}+o(1))^{n+1}\leq(\frac34+o(1))^n$. 

A plane $P$ spanned by linearly independent $\set{0,1}$-vectors $v_1,\dots,v_{n-d}$ contains the $\set{0,1}$-vector $v$ if and only if for all choices of $\set{0,1}$-vectors $w_1,\dots,w_{d-1}$ the $n\times n$ $\set{0,1}$-matrix $A_n=[v_1,\dots,v_{n-1},v,w_1,\dots,w_{d-1}]$ is singular.

The condition that $v_1\wedge\dots\wedge v_{n-d}\neq 0$ is the condition that there be $\set{0,1}$-vectors $w_1,\dots,w_d$ such that $B_n=[v_1,\dots,v_{n-d},w_1,\dots,w_d]$ be nonsingular. Note that a subset of this last condition is the case that the $(n-d)\times(n-d)$ minor, denoted $C_{n-d}$, be nonsingular. The probability that $C_{n-d}$ be nonsingular is the same that $A_{n-d}$ be nonsingular. So the probability that $v\in P$ for a randomly chosen plane $P\in \Gr_{n-d}(\C^n)$ can be estimated as
\begin{align*}
\P( v\in P) &\leq \frac{\P(\det(A_n)=0)}{1-\P(\det (B_n)= 0\ \forall w_1,\dots,w_d)}\leq  \frac{\P(\det(A_n)=0)}{\P(\det (A_{n-d})\neq 0)}\\
&\leq \frac{(\frac34+o(1))^n}{1-(\frac34+o(1))^{n-d}}=(\frac34+o(1))^n.
\end{align*}
Now if we let $P=W^\perp$ be the plane spanned by $n-d$ randomly chosen rows of our random $n\times n$ matrix we  note that the nearest distance to the plane $W^\perp$ of a vertex not in the plane is the distance of the origin to the standard $n$-simplex, namely $\frac1n$. Hence, letting $W^\perp$ vary over all choices of rows, and for $\delta<\frac1n$ we obtain that $\norm{f_X}_{d,\delta}\leq (\frac34+o(1))^n$. Since we will be taking $\delta<n^{-\frac52}$ we obtain the desired bound. Note the same estimate still holds even when $d$ is allowed to grow in $n$ provided $d=o(n)$.

Lastly we consider a case where we allow coordinate-wise dependency. Suppose the random Bernoulli vector $X$ has coordinate wise probability $\frac12$ of being $0$ or $1$ but has symmetric $n\times n$ covariance matrix $\op{Cov}(X)=[C_{ij}]$ with entries $C_{i,j}\in [-\frac14,\frac14]$ and $C_{ii}=\frac14$, i.e. the joint pairwise probabilities on coordinates $i$ and $j$ being $1$ are $p_{ij}=C_{i,j}+\frac14$, but are otherwise independent. The vector $W\cdot X$, where $W$ is the Whitening Matrix such that $W^tW=C^{-1}$, has covariance the identity matrix. Provided the entries of the off-diagonal entries $C_{ij}$ are uniformly bounded away from $\frac14$, and $\delta=o(\frac{1}{n})$, then we will have identical probability of $X$ being in the $\delta$-neighborhood of $W^{-1}P$ as for $WX$ in some $O(\delta)$-neighborhood of $P$. In particular $\norm{f_X}_{d,\delta}\leq (\frac34+o(1))^n$ provided $\delta=o(\frac1{n})$.
\end{example}

\section{Generalized Mar\v{c}enko-Pastur}

In this section, we discuss a key ingredient of the Circular Law: a corresponding limiting law for symmetric random matrices. Given $A_n$, the shifted and symmetrized version of $A_n$ is 
$$ H_n = H_n(z) = (\frac{1}{\sqrt{n}}A_n - zI)(\frac{1}{\sqrt{n}}A_n -zI)^* $$
for $z \in \mathbb{C}$. \\ 

One desires that the spectral distribution of $H_n$ converges in the large $n$ limit to a deterministic measure, independent of the entries of $A_n$. Or, as in our case, independent of the probability measure in $\C^n$ (resp. $\C^n$) that the rows of $A_n$ are independently drawn from. \\

The classical version of this result is the Mar\v{c}enko-Pastur law and does not assume that $A_n$ is square to begin with. Moreoever, the Mar\v{c}enko-Pastur law deals with iid entries and with the the partical case of shifting by $z=0$. We state it here for the reader.

\begin{theorem} (Mar\v{c}enko-Pastur Law)
Let $A_n$ be a $n \times N_n$ random matrix with iid entries that satisfy $\E[x_{ij}] = 0$ and $\E[x_{ij}^2] = 1$ and suppose that $\frac{p}{n} \rightarrow c \in (0, 1]$ as $n \rightarrow \infty$. Denote by $\mu_n$ the spectral distribution of $\frac{1}{n}A_nA_n^{*}$. Then, $\mu_n \rightarrow \mu$ almost surely, where $\mu$ is a deterministic measure given by 
$$ \frac{d\mu}{dx} = \frac{1}{2\pi xy} \sqrt{(b-x)(x-a)}\mbb{1}_{a \leq x \leq b} $$
where $a = (1 - \sqrt{y})^2$ and $b = (1+\sqrt{y})^2$
\end{theorem}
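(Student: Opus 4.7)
This is the classical Marčenko-Pastur law, so my plan is to execute the standard Stieltjes transform strategy (as in Bai-Silverstein). For $z$ with $\Im z > 0$ define
$$ s_n(z) = \int \frac{1}{x-z}\,d\mu_n(x) = \frac{1}{n}\tr\left((M_n - zI)^{-1}\right), \qquad M_n = \frac{1}{n}A_n A_n^*. $$
Since weak convergence of $\mu_n$ to a probability measure is equivalent to pointwise convergence of $s_n$ on the upper half plane (by the Stieltjes continuity theorem), the task reduces to identifying the limit of $s_n(z)$ and upgrading the mode of convergence to almost-sure.

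To identify the limit I would apply the Schur complement (block matrix inversion) formula to write each diagonal entry of $(M_n - zI)^{-1}$ as the reciprocal of a random scalar involving one row of $A_n$ and a leave-one-out resolvent that is independent of that row. The iid hypothesis supplies the needed independence, which in turn enables the quadratic-form trace lemma: for a random vector $X$ with iid unit-variance entries and a deterministic Hermitian $B$ independent of $X$, $X^*BX$ concentrates around $\tr(B)$. A rank-one perturbation and interlacing bound identifies the leave-one-out trace with $n s_n(z)$ up to error $O(1/\Im z)$. Averaging the resulting diagonal identity over $i$ produces a self-consistent quadratic equation for the limit $s(z)$, and Stieltjes inversion of its unique $\C^+$-valued root recovers the displayed density on $[a,b]$.

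For the almost-sure upgrade I would form the row-by-row martingale difference decomposition $s_n(z) - \E s_n(z) = \sum_{i=1}^n (\E_i - \E_{i-1})s_n(z)$, where $\E_i$ denotes conditional expectation given the first $i$ rows of $A_n$. Each increment is $O(1/(n\Im z))$ by rank-one interlacing, so Azuma-Hoeffding gives Gaussian tail concentration for $s_n(z)$, and Borel-Cantelli combined with Vitali's theorem on normal families extends the convergence from a countable dense subset of $\C^+$ to all of $\C^+$. A preliminary truncation of the entries at level $\sqrt{n}$ (with centering and rescaling to preserve the first two moments up to $o(1)$) handles the general second-moment hypothesis without altering the limit.

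The main technical obstacle is the quadratic-form concentration $X^*BX - \tr(B) = o(n\|B\|)$ under only two bounded moments, together with uniform control of this error in the fixed-point iteration; once the trace lemma is available with $o(1)$ relative error, the remainder is elementary algebra on a scalar quadratic in $s(z)$ whose inversion yields the explicit Marčenko-Pastur density.
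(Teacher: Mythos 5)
The paper does not actually supply a proof of this classical statement; it is quoted as a known background result, and the paper's own work begins with a \emph{generalization} (Theorem 3.2) adapted to dependent rows. Your Stieltjes-transform outline is a correct and standard route to the classical Mar\v{c}enko--Pastur law (Schur complement, leave-one-out resolvent, quadratic-form trace lemma, fixed-point equation, martingale difference plus Azuma and Vitali), but it is a genuinely different route from the one the paper chooses and deliberately highlights. The paper explicitly remarks that of the three canonical methods (``combinatorial, methods using the Stieltjes (Cauchy) transform, and methods using free probability''), the combinatorial one ``is the most readily generalized to our setting,'' and its proof of the generalized Theorem 3.2 is a moment computation: expand $\frac{1}{n}\E\tr H_n^k$ over $\Delta$-graphs, discard asymptotically negligible classes using A1 and A2, reduce the surviving terms to $\Gamma$-trees, and show $\xi_n(T)\to 1$ by induction on the size of the tree using A3. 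The point of contrast worth internalizing is that the two ingredients doing the heavy lifting in your sketch --- the leave-one-out independence and the concentration $X^*BX \approx \tr B$ for a row $X$ with iid unit-variance coordinates --- are precisely what fails once intra-row dependence is allowed, and what assumptions A1--A3 are designed to replace at the level of traces of powers rather than resolvents. So as a proof of the classical theorem your plan is sound; as a proxy for the paper's argument it is on the opposite branch of the road the authors describe, and it would not transfer to the paper's actual goal without reworking the trace lemma under their weaker hypotheses.
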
 

\vspace{1pc}

The Mar\v{c}enko-Pastur Law has many methods of proof: combinatorial, methods using the Stieltjes (Cauchy) transform, and methods using free probability. The combinatorial proof is the most readily generalized to our setting and has been utilized by other authors to extend the Mar\v{c}enko-Pastur Law to have certain amounts of dependency (Adamczak) and we will follow similar suit, but with some different assumptions. \\

Let $(N_n)_{n \geq 1}$ be a sequence of postive integers such that $\lim_{n \rightarrow \infty} n/N_n = y \in (0, \infty)$. Recall we have the following assumptions: \\

\begin{enumerate}
\item[(A1)] for every $k\in \mathbb{N}$, $\sup_n \max_{1\leq i\leq n,1\leq j\leq N_n} \E[|x_{ij}^{(n)}|^k] < \infty$;

\item[(A2)] for every $k\in \mathbb{N}$, the sum of all terms in $\Mom_{2k}(A_n)$ with at least one $x_{ij}$ appearing with a power of 1 is of size $o_k(n^{k+1})$;

\item[(A3)] for every $\epsilon > 0$:
$$ \lim_{n\rightarrow \infty} \frac{1}{n} \sum_{i \leq n} 
\P\( \abs{ \frac{1}{N_n}\sum_{j=1}^{N_n} (x_{ij}^{(n)})^2 - 1 } \geq \epsilon \) = 0 $$

and 

$$ \lim_{n\rightarrow \infty} \frac{1}{N_n} \sum_{j \leq N_n} 
\P\(\abs{\frac{1}{n}\sum_{i=1}^{n} (x_{ij}^{(n)})^2 - 1 } \geq \epsilon \) = 0.$$
\end{enumerate}

We need the following theorem for proof of the Circulaw Law. We will defer the proof of this result to Subsection \ref{sec:Graphs}.\\

\begin{theorem}[cf. Theorem 2.4 \cite{Adamczak11}]\label{thm:H_n} Assume that $N_n = n$ and $A_n$ is a sequence of random matrices with rows independently drawn from a probability measure in $\C^n$ given by $f^{(n)}$. Assume that $A_n$ satisfies assumption A1-A3. Then for any $k \in \mathbb{N}$, 
$$ \lim_{n\rightarrow \infty} \frac{1}{n}\E[\tr H_n^k] = \mu_k(|z|^2), $$
where $\mu_k(|z|^2)$ is a function depending only $|z|^2$ and not on the distribution of $H_n$. 
\end{theorem}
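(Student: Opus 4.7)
The plan is to follow the combinatorial moment method of Adamczak's Theorem~2.4, with two adaptations: we must handle the shift by $zI$ in the symmetrization, and we must use assumption A2 (a sum-level condition) in place of the usual per-entry martingale hypothesis. Write $B_n=\tfrac{1}{\sqrt n}A_n-zI$, so that entries are $b_{ij}=\tfrac{1}{\sqrt n}x_{ij}^{(n)}-z\delta_{ij}$. Expanding the trace gives
\[
\tfrac{1}{n}\E[\tr H_n^k]=\tfrac{1}{n}\sum_{\substack{i_0,\dots,i_{k-1}\\ j_0,\dots,j_{k-1}}}\E\!\left[\prod_{s=0}^{k-1}b_{i_s j_s}\overline{b_{i_{s+1}j_s}}\right]
\]
(indices mod $k$), where each summand corresponds to a closed walk in the bipartite graph with row-vertices $\{1,\dots,n\}$ and column-vertices $\{1,\dots,N_n\}$. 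Distributing each $b_{ij}$ into its $x$-part $\tfrac{1}{\sqrt n}x_{ij}^{(n)}$ and its $z$-part $-z\delta_{ij}$ (and similarly for $\bar b$) produces a sum over index-tuples \emph{and} over labelings of the $2k$ directed edges as either \emph{$x$-type} (random factor with $1/\sqrt n$) or \emph{$z$-type} (carrying $-z$ or $-\bar z$ and forcing the endpoints to be diagonal).

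First I would group these terms by \emph{shape}: the isomorphism type of the resulting decorated bipartite multigraph. The contribution of a shape $S$ factors as a combinatorial count of index-tuples (of order $n^{V_r}N_n^{V_c}$, where $V_r,V_c$ count distinct row/column vertices), times a bounded moment of the $x$'s (controlled by A1), times $n^{-e_x/2}$ from the $x$-edge factors. Assumption A2 is then invoked to kill every shape in which some $x_{ij}^{(n)}$ appears to the first power: for each fixed configuration of $z$-edges, summing over placements of the remaining $x$-edges gives a sub-sum of the one appearing in A2, hence $o_k(n^{k+1})$, which is $o(1)$ after the overall $\tfrac{1}{n}$ normalization. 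Only shapes in which every $x$-edge is traversed at least twice survive.

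The standard bipartite-walk power-of-$n$ count then shows that surviving shapes satisfy $V_r+V_c\leq k+1$, with equality if and only if the underlying decorated multigraph is a tree in which each $x$-edge is traversed exactly twice (once as $b$, once as $\bar b$). On such a tree edge, a mixed labeling ($b$ of $x$-type paired with $\bar b$ of $z$-type or vice versa) would leave an odd power of $x$ and was already killed in the previous step; matching labelings yield either $|x_{ij}^{(n)}|^2/n$ or $|z|^2$. The $x$-factors then assemble into averages of the form $\tfrac{1}{N_n}\sum_j(x_{ij}^{(n)})^2$ (or the column analogue), each converging to $1$ by A3 (and upgraded to $L^1$ convergence via A1). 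The limit is therefore a polynomial in $|z|^2$ alone, assembled by summing over all tree shapes with admissible $z$-edge placements; one verifies that this recovers the classical Marchenko-Pastur moment when $z=0$.

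The hard part is the careful use of A2 in the second step. Since A2 controls only the \emph{total} sum of power-1 terms in $\Mom_{2k}(A_n)$, one must organize the shape-sum so that each batch of shapes being discarded is exactly a sub-sum of A2 without combinatorial multiplicities that blow up the $o_k(n^{k+1})$ bound. This is where the argument diverges from Adamczak's, and where the weakening of his martingale hypothesis down to A2 needs to be justified. The tree-reduction step is also somewhat delicate because the $z$-edges effectively cut the bipartite tree into subtrees on which classical Catalan-type enumeration proceeds, and one must check that after pairing the surviving $z$-labelings the forward $-z$ factors are always matched by backward $-\bar z$ factors on the same tree edge, so that the final expression depends on $z$ only through $|z|^2$.
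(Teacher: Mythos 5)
Your proposal follows the paper's argument almost line by line: expand $\tr H_n^k$, organize the resulting sum by $\Delta$-graph isomorphism type, discard non-tree classes via a power-of-$n$ count plus A1, discard odd-power classes via A2, and reduce the surviving tree classes to a polynomial in $|z|^2$ using A3 (the paper packages this last reduction as the $\Gamma$-tree result, Proposition~\ref{prop:T=1}). The two places where you are more explicit than the paper---distributing the diagonal shift $-z\delta_{ij}$ up front rather than deferring the expansion of the perpendicular-edge factors $w_{ii}=\tfrac{1}{\sqrt n}x_{ii}-z$ until the end, and flagging that A2 controls only the \emph{total} sum of odd-power terms whereas the argument needs each individual $\Delta$-class sub-sum to be small---are both well taken, the latter especially so since the paper simply asserts the class-by-class bound ``by assumption A2'' without addressing possible cancellation within the full sum.
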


\begin{corollary}[cf. Corollary 2.5 \cite{Adamczak11}]\label{cor:Lweakly}
Let $A_n$ be as in Theorem \ref{thm:H_n} and let $L_n(z)$ be the spectral measure of $H_n(z)$. For every $z \in \C$, $L_n(z)$ converges weakly to a non-random measure which does not depend on the distribution of the rows of $A_n$. 
\end{corollary}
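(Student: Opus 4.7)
The plan is the standard moment method: we deduce weak convergence of the (random) spectral measures $L_n(z)$ to a deterministic limit from the moment convergence already furnished by Theorem \ref{thm:H_n}. First, note that the trace formula gives
\[
\int x^k \, dL_n(z)(x) = \tfrac{1}{n}\tr H_n^k,
\]
so Theorem \ref{thm:H_n} says exactly that the expected $k$-th moment of $L_n(z)$ converges to $\mu_k(|z|^2)$. To upgrade this to weak convergence in probability to a deterministic measure, I would carry out three steps.

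The first step is moment determinacy: verify that the sequence $\{\mu_k(|z|^2)\}_{k\geq 0}$ is the moment sequence of a unique probability measure $\mu_\infty^{(z)}$ on $[0,\infty)$. Because assumption (A1) gives uniform bounds on all entry moments and $H_n$ is a degree-$2$ polynomial in the entries, one obtains $\mu_k(|z|^2)\leq (C(1+|z|^2))^k$ for some constant $C=C(k)$ growing at worst factorially in $k$. Such a bound yields Carleman's condition $\sum_k \mu_{2k}^{-1/(2k)}=\infty$, which forces determinacy and thus produces a unique candidate limiting measure $\mu_\infty^{(z)}$, depending only on $|z|^2$ since the $\mu_k(|z|^2)$ do.

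The second, and main, step is a variance estimate:
\[
\op{Var}\left(\tfrac{1}{n}\tr H_n^k\right) \;=\; \tfrac{1}{n^2}\E[(\tr H_n^k)^2]-\bigl(\tfrac{1}{n}\E[\tr H_n^k]\bigr)^2 \;\longrightarrow\; 0.
\]
Expanding $(\tr H_n^k)^2$ as a polynomial in the $x_{ij}^{(n)}$ and $z,\bar z$ gives a sum of entries of $\Mom_{4k}(A_n)$ (weighted by polynomials in $z,\bar z$). The expected leading $n^{2k+2}$ contribution comes from pairings that split into two independent copies of the single-trace diagrams appearing in the proof of Theorem \ref{thm:H_n}, and these produce precisely $(\E\tr H_n^k)^2$ in the limit. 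The remaining terms are those whose associated pairing graph is \emph{connected across the two traces}; in these terms at least one $x_{ij}^{(n)}$ appears to the first power after performing all forced identifications. By assumption (A2) applied at level $4k$, the sum of all such terms is $o(n^{2k+2})$, exactly the savings needed. This is the step I expect to be the hard part, since it requires adapting the combinatorial graph bookkeeping from the proof of Theorem \ref{thm:H_n} to the product of two traces and carefully identifying which diagrams give the variance contribution.

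The third step is to combine Steps 1--2. By Chebyshev, $\int x^k\, dL_n(z)\to \mu_k(|z|^2)$ in probability for every $k$; together with the determinacy of $\{\mu_k(|z|^2)\}$ from Step 1, the classical ``method of moments implies weak convergence'' theorem (see e.g.\ Billingsley) gives $L_n(z)\Rightarrow \mu_\infty^{(z)}$ in probability. Since $\mu_\infty^{(z)}$ is deterministic and depends on the distributions of the rows only through the limiting moments $\mu_k(|z|^2)$, which by Theorem \ref{thm:H_n} do not depend on the row distributions, the corollary follows.
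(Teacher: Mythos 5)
Your strategy --- Carleman determinacy plus a variance estimate for the traces, then Chebyshev and the method of moments --- is the standard route and is in the same spirit as the argument in Adamczak \cite{Adamczak11} that the paper's one-line proof invokes. The determinacy step is fine. However, Step 2 overstates what (A2) does for you. It is not true that every diagram which is connected across the two traces has an entry appearing to the first power: already at $k=1$, the ``diagonal'' terms in $(\tr H_n)^2$ with the two index pairs equal contribute $\frac{1}{n^2}|x_{ij}|^4$, a connected diagram in which $x_{ij}$ appears only to even powers. Such terms are \emph{not} controlled by (A2); they are suppressed by a vertex-counting bound on the reduced graph, exactly as in the trichotomy inside the proof of Theorem~\ref{thm:H_n} (too few vertices, or a singleton skew edge handled by (A2), or a surviving tree). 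Your variance argument therefore needs to reproduce that trichotomy for the doubled $4k$-edge graph --- counting kills some connected diagrams, (A2) kills those with a multiplicity-one skew edge, and you then need a separate argument that the remaining connected tree-type diagrams carry an extra factor of $n^{-1}$ --- rather than appealing to (A2) alone.

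There is also a mode-of-convergence issue. Your conclusion, via Chebyshev with a variance that decays at best like $O(1/n)$, is weak convergence \emph{in probability}; but Corollary~\ref{cor:Lweakly} is consumed downstream inside ``with probability one, for sufficiently large $n$'' claims (the Low-dimensional contribution lemma), and the Tao--Vu replacement principle as stated in Theorem~\ref{thm:tau-vu} requires almost sure convergence. The cheapest upgrade does not go through the moments at all: since the rows are independent and changing one row alters $H_n(z)$ by a rank-$2$ perturbation, the empirical CDF of $L_n(z)$ changes by at most $2/n$ in sup norm, so McDiarmid's inequality gives $\P\bigl(|F_{L_n(z)}(t)-\E F_{L_n(z)}(t)|>s\bigr)\leq 2e^{-ns^2/2}$, which is summable in $n$. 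Combined with the convergence of $\E L_n(z)$ supplied by Theorem~\ref{thm:H_n} and your Step~1, Borel--Cantelli then yields the almost sure statement. You should either add this concentration step or state explicitly that you are only proving the weaker in-probability version.
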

\begin{proof}
The proof of this corollary is verbatim identical to the proof of Cor 2.5 of  \cite{Adamczak11} except for the replacement of Theorem 2.4 of \cite{Adamczak11} with Theorem \ref{thm:H_n} above.
\end{proof}

\subsection{Combinatorial Trees}

\vspace{1pc}
In what follows, we will use certain combinatorial structures to keep track of distinct classes of terms. A detailed background for these structures can be found in Chapter 3 of \cite{Bai10}. We will closely follow the notation of \cite{Adamczak11}.
 
Let $T = (V, E, r)$ be a rooted tree. A $\Gamma$-tree is a rooted tree having the following structure:
\begin{itemize}
\item The set $V$ is partitioned into two sets $S$ and $O$, denoting special and ordinary vertices

\item Every edge adjacent to a special vertex is given an orientation so that
\begin{itemize}
\item For any $u, w \in S$ such that on the path $u = v_0v_1..v_m = w$ connecting $u$ and $w$, we have that $v_1, .., v_{m-1} \in O$. If $m$ is odd, then the orientations of the first and the last edge on this path are the same. That is to say, one has ($u \rightarrow v_1$ and $v_{m-1} \rightarrow w$) or ($v_1 \rightarrow u$ and $w \rightarrow v_{m-1}$). If $m$ is even, then the orientation of the first and the last edge in the path are opposite. 
\item if $r \in O$, then for any $u\in S$ such that $u$ is the only special vertex on the path $r = v_0v_1...v_m = u$, one has $v_{m-1} \rightarrow u \iff m$ is odd. 
\end{itemize}
\end{itemize}

\vspace{1pc}

Given the orientation of paths between special vertices, we can partition $O$ into two sets $U$ and $D$. Let $u \in O$ and $r = v_0v_1...v_m = u$ be a path from the root $r$ to $u$. 
\begin{itemize}
\item if $r \in O$ and $v_1, ..., v_{m-1} \in O$, then $u \in D \iff m$ is odd. Otherwise, $u \in U$. 
\item if $v_l$ is the last special vertex on the path, then $u\in D \iff$ ($m-l$ is odd and $v_l \rightarrow v_{l+1}$) or ($m-l$ is even and $v_{l+1} \rightarrow v_l$).
\end{itemize}

\vspace{1pc}

Note that every edge which has ends that are ordinary vertices must have one end in $D$ and one end in $U$ and we can assign to each edge an orientation $u \rightarrow v$ where $u\in U$ and $v \in D$. We write $e = (u\rightarrow v)$.\\

Let $A_n$ be a sequence of random $n\times n$ matrices with $A_n = [x_{ij}^{(n)}]$ and let $T$ be a $\Gamma$-tree. Let $I_T^n$ be the set of functions $\mbf{i} = (i_v)_{v \in V} : V \rightarrow \set{1, ..., n}$ such that if $A$ is one of the sets $D \cup S$, $U \cup S$ then $\forall u,v \in A$, $u \neq v \implies i_u \neq i_v$. And, for every $e = (u \rightarrow v) \in E$, $i_u \neq i_v$. \\

We define 
$$\xi_n(T) = n^{-|E| -1} \E \left( \sum_{\mbf{i} \in I_T^n} \prod_{e = (u \rightarrow v)\in E} ( x_{i_ui_v}^{(n)})^2 \right) $$

We will prove the following proposition necessary to prove Theorem \ref{thm:H_n}. 

\begin{proposition}\label{prop:T=1}
Let $N_n = n$ and let $A_n$ be as in Theorem \ref{thm:H_n}. Then for every $\Gamma$-tree $T$,
$$ \lim_{n\rightarrow \infty} \xi_n(T) = 1 $$
\end{proposition}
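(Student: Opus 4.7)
The plan is to proceed by induction on the number of edges $|E|$ of the $\Gamma$-tree $T$, using A3 to handle the summation over a single leaf index at each step and A1 to control correlated error terms via truncation. Recall from the orientation rules that every edge $e$ of a $\Gamma$-tree can be written $u\to v$ with $u\in U\cup S$ and $v\in D\cup S$, so removing a leaf preserves the $\Gamma$-tree structure.

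For the base case, a $\Gamma$-tree with a single vertex and no edges satisfies $\xi_n(T) = n^{-1}\sum_{i=1}^n 1 = 1$. For the inductive step, I would pick any leaf $v$ of $T$ with incident edge $e$, and set $T' = T \setminus \{v\}$. Say $e = (u\to v)$ with $v\in D\cup S$; the reverse orientation is handled identically using the column version of A3. Writing
\[
\xi_n(T) = n^{-|E|-1}\,\E\!\left[\sum_{\mathbf{i}' \in I_{T'}^n} \prod_{e' \in E_{T'}}(x^{(n)}_{i_{u'}i_{v'}})^2 \cdot S_{\mathbf{i}'}\right],
\]
with $S_{\mathbf{i}'} := \sum_{i_v}(x^{(n)}_{i_u i_v})^2$ summed over $i_v$ distinct from $i_u$ and from the values of $\mathbf{i}'$ on the other vertices of $D\cup S$. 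Extending this sum to all $i_v$ only adds $O(|V|)$ terms, and by A3 we have $S_{\mathbf{i}'} = n(1+\epsilon_{i_u}) + O(1)$, where $\epsilon_{i_u} := \frac{1}{n}\sum_{j=1}^n(x^{(n)}_{i_u j})^2 - 1$ satisfies $\frac{1}{n}\sum_{i_u}\P(|\epsilon_{i_u}| > \delta) \to 0$ for every $\delta > 0$.

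Next, I would perform a standard truncation of entries at a large level $M$, justified by A1, so that $\xi_n(T)$ changes by $o_M(1)$ (first as $n\to\infty$ and then as $M\to\infty$) and every squared entry is bounded by $M^2$. Substituting the expansion for $S_{\mathbf{i}'}$ gives
\[
\xi_n(T) = \xi_n(T') + R_n,
\]
where $R_n$ collects the $\epsilon_{i_u}$ contribution and the $O(1)$ leaf correction. The leaf correction is $O(n^{-1})\xi_n(T') = o(1)$. For the $\epsilon_{i_u}$ piece, Cauchy--Schwarz gives $|\E[\epsilon_{i_u}\prod_{e'}(x)^2]| \leq M^{2|E'|}\sqrt{\E[\epsilon_{i_u}^2]}$; combining the averaged tail bound from A3 with uniform moment control from A1 yields $\frac{1}{n}\sum_{i_u}\E[\epsilon_{i_u}^2] \to 0$, so after summing over the remaining indices and normalizing, $R_n \to 0$. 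The inductive hypothesis $\xi_n(T') \to 1$ then gives $\xi_n(T) \to 1$.

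The hard part will be controlling the correlation between the leaf correction $\epsilon_{i_u}$ and the remaining edge product $\prod_{e'}(x)^2$: since rows of $A_n$ may have intra-row dependence, one cannot factorize these expectations directly. The combination of A1 (uniform bound on mixed moments after truncation) and A3 (averaged-in-$i_u$ convergence of row and column sums of squared entries) is precisely what should let one decouple them up to $o(1)$ error. Some additional bookkeeping will be needed for the distinctness constraints built into $I_T^n$ and for leaves that happen to be special vertices, but these introduce only lower-order corrections without changing the structure of the inductive argument.
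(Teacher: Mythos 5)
Your proof is correct and follows essentially the same inductive leaf-removal strategy as the paper: remove a leaf, use A3 (averaged over the neighbor's row or column index) to handle the sum over the leaf index, and use A1 to control the correlated moments. The one technical difference is that you truncate the entries at level $M$ and then pass to an $L^2$-averaged bound $\frac{1}{n}\sum_{i_u}\E[\epsilon_{i_u}^2]\to 0$, whereas the paper avoids truncation entirely by writing $\E\bigl[\,|\epsilon_{i_x}|\,U\bigr]\leq \eps\,\E U + \bigl\||\epsilon_{i_x}|U\bigr\|_2\,\P(|\epsilon_{i_x}|\geq\eps)^{1/2}$ and bounding $\E U$ and $\||\epsilon_{i_x}|U\|_2$ directly via generalized H\"older from A1; both routes work, but the paper's sidesteps having to verify that $\xi_n(T)$ and the row-sum concentration in A3 are stable under truncation.
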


\begin{proof}
We prove by induction on the size of the tree. If $|V| = 1$ then it is clear that $\xi_n(T) = 1$ for all $n$. \\

Suppose that the proposition holds for all trees of size $s \leq m-1$ and that $T$ is a tree of size $m$. Consider an arbitrary leaf $w$ of the tree $T$, where $w$ is not the root and let $x$ be the unique neighbor of $w$. \\

We consider the case where $w \in D$ (The proof when $w \in U$ follows similarly so we omit it). Let $\tilde T = (\tilde V, \tilde E, r)$ be the tree obtained from $T$ by deleting $w$ along with the edge $e = (x \rightarrow w)$.  \\

Let $\tilde I^n$ be the set of multi-indices $\mbf{i}_{\tilde V}: \tilde V \rightarrow \{ 1, ..., n \}$ which can be extended to a mutli-index $\mbf{i}_V = (\mbf{i}_{\tilde V}, i_w) \in I_T^n$. Denote $U(\mbf{i}_{\tilde V}) = \prod_{e = (u \rightarrow v) \in \tilde E} (x_{i_u i_v}^{(n)})^2$. We have
$$ \xi_n(T) = n^{-|U| - |D|}  \sum_{\mbf{i}_{\tilde V} \in \tilde I^n} \sum_{i_w : (\mbf{i}_{\tilde V}, i_w) \in I_T^n} \E \left( ( x_{i_xi_w}^{(n)})^2 U(\mbf{i}_{\tilde V}) \right) $$

For large enough $n$, $I_{\tilde T}^n = \tilde I^n$, and there are only $|D|-1$ choices for $i_w$ such that $(\mbf{i}_{\tilde V}, i_w) \notin I_T^n$. 
By A1, we have that $\E((x_{ij}^{(n)})^2)$ is bounded for all $i, j$, independent of $n$. Thus, by generalized H\"{o}lder's inequality, for every such $i_w$ we have that $\E( (x_{i_xi_w})^2 U(\mbf{i}_{\tilde V}))$ is bounded by a number independent of $n$. Thus, for large enough $n$, we have

\begin{align*}
 \xi_n(T) &= n^{-|U| - |D|}  \sum_{\mbf{i}_{\tilde V} \in I_{\tilde T}^n} \left( \sum_{i_w = 1}^n \E \left( ( x_{i_xi_w}^{(n)})^2 U(\mbf{i}_{\tilde V})\right) + O_T(1) \right) \\
 &= n^{-|U| - |D|}  \sum_{\mbf{i}_{\tilde V} \in I_{\tilde T}^n}  \sum_{i_w = 1}^n \E \left( ( x_{i_xi_w}^{(n)})^2 U(\mbf{i}_{\tilde V}) \right) + o_T(1)
\end{align*}
where the constant depends on $T$ and where in the last inequality we use the fact that $|I_{\tilde T}^n| = n(n-1)...(n-|U|+1)n...(n-|D|+2) = O_T(n^{|U| + |D|-1})$.\\

Notice that, 
\begin{align*}
\left|n^{-|U| - |D|} \sum_{\mbf{i}_{\tilde V} \in I_{\tilde T}^n}  \sum_{i_w = 1}^n \E \left( ( x_{i_xi_w}^{(n)})^2 U(\mbf{i}_{\tilde V}) \right)  - \xi_n(\tilde T)  \right| \\
\leq n^{-|U|-|D|+1} \sum_{\mbf{i}_{\tilde V} \in I_{\tilde T}^n} \E \left( \abs{n^{-1} \sum_{i_w = 1}^n ( x_{i_xi_w}^{(n)})^2 - 1 } U(\mbf{i}_{\tilde V}) \right). 
\end{align*}
Moreover, using that $y\leq \max\set{\eps,y}$ for any $\eps>0$ and then applying Cauchy-Schwarz, we have for every $\mbf{i}_{\tilde V} \in I_{\tilde V}^n$ that,
\[
\E \left( \abs{n^{-1} \sum_{i_w = 1}^n ( x_{i_xi_w}^{(n)})^2 - 1 } U(\mbf{i}_{\tilde V}) \right) \leq \\
\epsilon \E U(\mbf{i}_{\tilde V}) + \norm{ \abs{n^{-1} \sum_{i_w = 1}^n ( x_{i_xi_w}^{(n)})^2 - 1 } U(\mbf{i}_{\tilde V})}_2
\P\left(\abs{n^{-1}\sum_{j=1}^n (x_{i_xj}^{(n)})^2 - 1} \geq \epsilon \right)^{1/2}
\]

\vspace{1pc}

By A1, the triangle inequality in $L_p$, and generalized Holder's inequality, we have that
$$ \E U(\mbf{i}_{\tilde V}) \text{ and } \norm{ \abs{n^{-1} \sum_{i_w = 1}^n ( x_{i_xi_w}^{(n)})^2 - 1 } U(\mbf{i}_{\tilde V})}_2 $$
are bounded by some constant $C$, depending only on $T$ and the bounds from A1. Thus, we get that

$$ |\xi_n(T) - \xi_n(\tilde T)| \leq \epsilon + C n^{-|U|-|D|+1} \sum_{\mbf{i}_{\tilde V} \in I_{\tilde T}^n} (\epsilon + \P(|\sum_{j=1}^n (x_{i_xj}^{(n)})^2 - n| \geq \epsilon n)^{1/2}) $$

Since for each $i_x$ there are at most $n^{|U|+|D|-2}$ multi-indices $\mbf{i}_{\tilde V \setminus \{x\} }$ such that $\mbf{i}_{\tilde V} = (\mbf{i}_{\tilde V \setminus \{ x \}}, i_x) \in I_{\tilde T}^n$, we get that

$$ |\xi_n(T) - \xi_n(\tilde T)| \leq (C+1)\epsilon + \frac{C}{n} \sum_{i_x=1}^n \P(|\sum_{j=1}^n (x_{i_xj}^{(n)})^2 - n| \geq \epsilon n)^{1/2} $$

Using the Cauchy-Schwartz inequality and the first part of assumption A3 (note: for $w \in U$, we simply use the second part of assumption A3 here), we get that  
$$ \xi_n(T) - \xi_n(\tilde T) = o_T(1) $$

Thus, we have that
$$\lim_{n \rightarrow \infty} \xi_n(T) = 1$$ for trees of size $m$ and the proof follows by induction. 

\end{proof}

\subsection{$\Delta$ Graphs}\label{sec:Graphs}

To prove Theorem \ref{thm:H_n} we begin by introducing the notion of $\Delta$ graphs. Here we follow closely the work outlined in Adamczak (\cite{Adamczak11}) with slight modifications for our alternative assumptions.\\

For two sequences of integers $\mbf{i} = (i_1, ..., i_k)$ and $\mbf{j} = (j_1, ..., j_k)$, we define a $\Delta$-graph $\Delta=G(\mbf{i}, \mbf{j})$ as a bipartite graph $(I_i, I_j, E)$ such that $I_i = \{ i_1, ..., i_k \}$ (the upper indicies) and $I_j = \{ j_1, ..., j_k \}$ (the lower indices) and the set $E$ of edges consisting of $k$ directed edges from $i_u$ to $j_u$ and $k$ directed edges from $j_u$ to $i_{u+1}$, where we set $i_{k+1} = i_1$. We also label the edges from 1 to $2k$ in the order of $(i_1, j_1), (j_1, i_2),(i_2, j_2),..., (i_k, j_k),(j_k, i_1)$. Note that $I_i$ and $I_j$ may not be disjoint, but their common elements are treated as different objects when considered as upper and lower vertices of the graph. \\

We would also like to partition into classes of up and down edges. An edge will be called perpindicular if its two end vertices are equal and skew if they are distinct. For any $\Delta$-graph $\Delta$, let $UP(\Delta)$ denote the up edges, $DP(\Delta)$ denote the down edges, and $S(\Delta)$ denote the skew edges.\\

\begin{definition}
Pairs $(\mbf{i} , \mbf{j})$ and $(\mbf{i}^{\prime}, \mbf{j}^{\prime})$ are isomorphic if there exist functions $f: I_{i} \rightarrow I_{i^{\prime}}$ and $g: I_{j} \rightarrow I_{j^{\prime}}$, such that for $u = 1, ..., k$ one has:
\begin{itemize}
\item $f(i_u) = i_u^{\prime}, g(j_u) = j_u^{\prime}$
\item $f(i_u) = g(j_u) \iff i_u = j_u$
\item $f(i_{u+1}) = g(j_u) \iff i_{u+1} = j_u$
\end{itemize}
\end{definition}

\begin{definition}
$G(\mbf{i}, \mbf{j})$ and $G(\mbf{i}^{\prime}, \mbf{j}^{\prime})$ are isomorphic if and only if $(\mbf{i},\mbf{j})$ and $(\mbf{i}^{\prime}, \mbf{j}^{\prime})$ are ismorphic. We write $G(\mbf{i}, \mbf{j})\sim G(\mbf{i}^{\prime}, \mbf{j}^{\prime})$ when the two graphs are isomorphic.
\end{definition}

Let $\Delta(k)$ be a set of representatives of isomorphism classes of $\Delta$-graphs $G(\mbf{i},\mbf{j})$ with $\mbf{i} = (i_1, ..., i_k)$, $j = (j_1, ..., j_k)$, and $i_l, j_l \in \{ 1, ..., 2k \}$. Any graph based on two sequences of length $k$ is isomporphic to a graph in $\Delta(k)$. \\

\begin{definition}
Given $\Delta \in \Delta(k)$, we definite $I_{\Delta}^n$ to be the set of all indices $\mbf{i}: V(\Delta) \rightarrow \{ 1, ..., n \}$ such that
\begin{itemize}
\item for any two upper indices $v,w$, we have $i_v \neq i_w$
\item for any two lower indices $v,w$, we have $i_v \neq i_w$
\item for any edge $i_{u(e)} = i_{d(e)} \iff e$ is perpindicular
\end{itemize}
\end{definition}

Denote by $W_n = \frac{1}{\sqrt{n}}A_n - zI = (w_{ij})$, so that $H_n=W_nW_n^*$ where we now supress the dependence on $n$ when we write the entries for ease of notation; we will also suppress this dependence for the entries $x_{ij}$. We now prove Theorem \ref{thm:H_n}.

\begin{proof} (of Theorem \ref{thm:H_n})
We have
\begin{align*}
&\frac{1}{n} \E[ \tr H_n^k] \\
&= \frac{1}{n} \sum_{i_1, ..., i_k = 1}^{n} \sum_{j_1, ..., j_k = 1}^{n} \E w_{i_1j_1} \bar w_{i_2j_1}...w_{i_kj_k} \bar w_{i_1j_k} \\
&= \frac{1}{n} \sum_{\Delta \in \Delta(k)} \sum_{\substack{i, j \in \{ 1, ...,n \}^k :  \\ G(i, j) \sim \Delta}} \E w_{i_1j_1} \bar w_{i_2j_1}...w_{i_kj_k} \bar w_{i_1j_k} \\
&= \sum_{\Delta \in \Delta(k)} \frac{1}{n} \sum_{\mbf{i} \in I_{\Delta}^n} \E \left( \prod_{e \in S(\Delta)} w_{i_{u(e)}i_{d(e)}}  \prod_{e \in UP(\Delta)} \bar w_{i_{u(e)}i_{d(e)}} \prod_{e \in DP(\Delta)} w_{i_{u(e)}i_{d(e)}} \right) \\
&= \sum_{\Delta \in \Delta(k)} \frac{1}{n^{\alpha}} \sum_{\mbf{i} \in I_{\Delta}^n} \E \left( \prod_{e \in S(\Delta)} x_{i_{u(e)}i_{d(e)}}  \prod_{e \in UP(\Delta)} \bar w_{i_{u(e)}i_{d(e)}} \prod_{e \in DP(\Delta)} w_{i_{u(e)}i_{d(e)}} \right) \\
\end{align*}
where $\alpha = {1+|S(\Delta)|/2}$. For a fixed $\Delta$, let $\Delta^{\prime}$ be the graph obtained by replacing each pair of vertices connected with a perpendicular edge by one vertex and removing all corresponding perpendicular edges, while keeping all skew edges so that $\Delta^{\prime}$ is connected and has $|S(\Delta)|$ edges. For this fixed $\Delta$, each term in the sum over $\mbf{i} \in I_{\Delta}^n$ above is bounded by some constant in $k, z$ as we have all bounded moments of the individual $x_{ij}$.\\

As $|I_{\Delta}^n| \leq n^{|V(\Delta^{\prime})|}$, the graphs $\Delta$ such that $\Delta^{\prime}$ has fewer than $1 + S(\Delta)/2$ vertices have no asymptotic contribution. Note that in the case of $z=0$, these are entries of $\Mom_k(A_{n})$ that have $x_{ij}$ terms with powers greater than or equal to 2, but not all equal to 2. \\

Moreover, for skew edges $e = (v,w)$ of multiplicity 1, $(w,v)$ is not an edge of $\Delta$ and so the corresponding variable $x_{i_{u(e)}j_{v(e)}}$ appears in the product exactly once. By assumption A2, the sum of these terms is also asymptotically negligible. Note that in the case of a random matrix with mean zero iid entries, these terms vanish automatically.\\

We are left with the graphs $\Delta$ for which each skew edge $e$, treated as an undirected edge, appears only twice and $\Delta^{\prime}$ has at least $1 + |S(\Delta)|/2$ vertices. Let $\Delta^{\prime\prime}$ be the graph formed by  identifying up and down edges of $\Delta^{\prime}$ that share the same endpoints. This implies that number of edges $a$ of $\Delta^{\prime\prime}$ is at most $|S(\Delta)|/2$. If $b$ is the number of vertices of $\Delta^{\prime\prime}$, we have that $b \geq a+1$. Moreover, since $\Delta^{\prime\prime}$ is connected we have that $b = a+1 = |S(\Delta)|/2 + 1$, and $\Delta^{\prime\prime}$ is a tree. Since the cycle in $\Delta^{\prime}$ inherited from $\Delta$ corresponds to a walk in $\Delta^{\prime\prime}$ which goes through every vertex and returns to the starting vertex, it means all skew edges in $\Delta$ appear exactly twice. We also have that among the perpendicular edges connected any two vertices of $\Delta$, there are equal numbers of up and down edges. Thus, we can write
$$
\frac{1}{n} \E trH_n^k = \sum_{\substack{ \Delta \in \Delta(k): \\ \Delta^{\prime\prime} \text{is a tree}}} |z|^{2|UP(\Delta)|} n^{-\alpha} \sum_{i \in I_{\Delta}^n} \E \prod_{e \in S(\Delta)} x_{i_{u(e)}i_{d(e)} } + o_{k,z}(1) $$

To each $\Delta$ such that $\Delta^{\prime\prime}$ is a tree, we can assign a $\Gamma$-tree $T(\Delta)$, where the special vertices are obtained by merging vertices of $\Delta$ connected by perpindicular edges and the orinetation of edges is always from up to down. Using Proposition \ref{prop:T=1}, we have

\begin{align*}
& \frac{1}{n} \E \tr H_n^k \\
&= \sum_{\substack{ \Delta \in \Delta(k): \\ \Delta^{\prime\prime} \text{is a tree}}} |z|^{2|UP(\Delta)|} \xi_n(T(\Delta)) + o_{k,z}(1) \\
&= \sum_{\substack{ \Delta \in \Delta(k): \\ \Delta^{\prime\prime} \text{is a tree}}} |z|^{2|UP(\Delta)|} + o_{k,z}(1)
\end{align*}

which completes the proof.
\end{proof}

\section{Proof of Circular Law}

In this section we prove the main theorem of this paper

\begin{theorem}\label{thm:weakly}
Let $A_n$ be a sequence of $n \times n$ random matrices with independent rows $X_1^{(n)}, ..., X_n^{(n)}$ defined on a common probability space and satisfying assumptions A1-A3. Assume that for each $n$ and $i,d\leq n$, and all $\delta>0$, the probability measures $\nu_i^{(n)}$ for $X_i^{(n)}$ have uniformly bounded $\norm{\nu_1^{(n)},\dots,\nu_n^{(n)}}_{d, \delta}$. Then almost surely the spectral measure of $\mu_{\frac{1}{\sqrt{n}}A_n}$ converges weakly to the uniform distribution on the unit disk in $\C$. 
\end{theorem}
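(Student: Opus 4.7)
The overall plan is to apply the Tao--Vu Hermitization / replacement principle \cite{Tao08}: to show almost sure weak convergence of $\mu_{\frac{1}{\sqrt n}A_n}$ to the uniform law on the unit disk, it is enough to verify (i) $\frac{1}{n^2}\|A_n\|_F^2 = O(1)$ almost surely, (ii) for a.e.\ $z\in\C$ the empirical spectral measure $L_n(z)$ of $H_n(z)$ converges almost surely to a deterministic measure $\nu_z$, and (iii) the logarithmic potentials converge, $\int \log x\, dL_n(z)(x)\to \int \log x\, d\nu_z(x)$ almost surely for a.e.\ $z$. Item (i) follows from A1 together with the concentration statement in A3, with the almost sure upgrade coming from a Borel--Cantelli argument off the uniformly bounded higher moments in A1. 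Item (ii) is supplied by Corollary \ref{cor:Lweakly}: the $\Gamma$-tree moment method behind Theorem \ref{thm:H_n} identifies the limit $\nu_z$ of $L_n(z)$ as a deterministic measure depending only on $|z|^2$, which must then coincide with the corresponding limit for the Ginibre ensemble whose log-potential generates the circular law.

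The substance of the proof therefore lies in item (iii), namely the uniform integrability of $\log x$ at $0$ with respect to $\{L_n(z)\}_n$ (the tail at infinity is immediate from (i) and Markov). This is where the geometric hypothesis on $\|\nu_1^{(n)},\dots,\nu_n^{(n)}\|_{d,\delta}$ is essential. Since the shift $zI$ is deterministic, the relevant $\|\cdot\|_{d,\delta}$ quantities for the shifted row distributions are controlled by the original ones (the integrals defining the norm are translation-invariant, and only the auxiliary Grassmannian measures are perturbed). Theorem \ref{thm:leastsing} applied in this shifted setting then yields $\sigma_n\!\left(\tfrac{1}{\sqrt n}A_n - zI\right)\ge n^{-5/2}/\log n$ almost surely, which removes the contribution from the very smallest singular value. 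For the remaining small singular values $\sigma_{n-k}$ with $k$ ranging up to $n^{1-\beta}$, one iterates Lemma \ref{lem:proj} at codimension $d=k$: the probability that a given row lies within $\delta$ of the span of any $n-d$ other rows is at most $\omega_d\delta^d\|\nu_1^{(n)},\dots,\nu_n^{(n)}\|_{d,\delta}$, and summing over $i$ and over dyadic scales of $\delta$ produces a count bound of the form ``at most $o(n)$ singular values of $\tfrac{1}{\sqrt n}A_n - zI$ lie below any fixed polynomial threshold'', which forces $\int_0^\epsilon|\log x|\,dL_n(z)(x)\to 0$ uniformly in $n$.

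The principal obstacle I foresee is the intermediate-scale estimate. Theorem \ref{thm:leastsing} is written only for codimension $d=1$, so to handle $\sigma_{n-k}$ for $k$ growing one must re-run its proof at higher codimension, invoking the full strength of Definition 2.9 with the auxiliary Grassmannian measures $\mu_{i,k},\mu_{i,l}'$ induced by choosing spanning rows from the non-identically-distributed rows of $A_n$ itself. Verifying that the supremum defining $\|\nu_1^{(n)},\dots,\nu_n^{(n)}\|_{d,\delta}$ remains the correct majorant in this shifted setting, handling the dependence introduced by drawing the spanning rows from the same matrix, and organizing the union bound over $i$, $k$, and dyadic $\delta$ so that the final error telescopes to $o(1)$, constitutes the technical crux of the argument. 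Once this is in place, combining (i)--(iii) through the replacement principle delivers the circular law almost surely.
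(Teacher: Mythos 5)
Your proposal is substantively the same approach as the paper's: Tao--Vu replacement against a Gaussian comparison ensemble, Corollary~\ref{cor:Lweakly} for the deterministic limit of $L_n(z)$, Theorem~\ref{thm:leastsing} for the very smallest singular value, and the higher-codimension norm $\|\cdot\|_{d,\delta}$ via Lemma~\ref{lem:proj} (packaged in the paper as Proposition~\ref{prop:tail}) to control the remaining small singular values and make $\log$ uniformly integrable at $0$. You correctly locate the technical crux in the intermediate-codimension estimate and the induced Grassmannian measures. Two remarks.

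First, the assertion that ``the integrals defining the norm are translation-invariant'' is false, and this is worth being careful about. In Definition~2.1 the averaging ball $x+B_{W^\perp}(\delta)$ is always centered on $W$ itself; translating $f$ by $a$ with $\proj_{W^\perp}(a)\neq 0$ shifts the slab off-center in $W^\perp$, and $\|f(\cdot-a)\|_{W,\delta,1}\neq\|f\|_{W,\delta,1}$ in general. When one passes from $X_i$ to $Z_i=\frac{1}{\sqrt n}X_i-ze_i$ the relevant probability becomes $\P\bigl(\|\proj_{W^\perp}(X_i)-\proj_{W^\perp}(\sqrt n\,z e_i)\|\le\delta\bigr)$, so the target ball sits at an $i$-dependent off-origin center. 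The paper does not explicitly address this either (Proposition~\ref{prop:tail} is stated for a row $X$ of $A_n$ but is applied to the shifted rows $Z_i$), so you are right to flag the issue --- but the reason you give for dismissing it does not hold, and one would want either a shift-uniform version of $\|\cdot\|_{d,\delta}$ or an argument that the hypothesis in Theorem~\ref{thm:weakly} controls the shifted norms.

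Second, your ``count of singular values below a threshold'' phrasing is a reasonable heuristic, but the paper's execution is slightly different in flavor: it proves a \emph{pointwise} lower bound $\frac{1}{\sqrt n}\sigma_i(A_{nn'})\gtrsim (n'-i)/n$ by combining Proposition~\ref{prop:tail} with Lemma~\ref{lem:dist} (the Negative Second Moment identity) and Cauchy interlacing (Lemma~\ref{lem:Cauchy}), then splits the integral $\int|\log t|\,d\nu_{nn'}$ into four regions ($t$ very large, intermediate, moderately small, small) with the intermediate region absorbed by Corollary~\ref{cor:Lweakly}. Both routes achieve the same uniform integrability, but the pointwise bound plus the Negative Second Moment identity is the mechanism that actually closes the argument in the paper, and a pure count bound would not suffice on its own near $i\approx n'$ without it.
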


To prove this theorem, we will use the following replacement principle for random matrices by Tao and Vu.

\begin{theorem}[Tao-Vu Replacement Principle]\label{thm:tau-vu}
Suppose for each $n$ that $A_n, B_n \in M_n(\C)$ are ensembles of random matrices defined on a common probability space. Assume that
\begin{enumerate}
\item
$$\frac{1}{n^2}\norm{A_n}^2 + \frac{1}{n^2}\norm{B_n}^2$$ is almost surely bounded
\item
for almost all complex numbers $z$,
$$ \frac{1}{n}\log|\det(\frac{1}{\sqrt{n}}A_n - zI)| - \frac{1}{n}\log|\det(\frac{1}{\sqrt{n}}B_n - zI)|$$
coverges almost surely to zero.
\end{enumerate}
Then $\mu_{\frac{1}{\sqrt{n}}A_n} - \mu_{\frac{1}{\sqrt{n}}B_n}$ coverges almost surely to 0.
\end{theorem}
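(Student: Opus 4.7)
The plan is to translate both hypotheses into the language of logarithmic potentials on $\C$ and use the distributional Poisson identity $\Delta \log|\cdot| = 2\pi\delta_0$ to convert pointwise a.e.\ convergence of potentials into weak convergence of the associated empirical spectral measures. For an $n\times n$ matrix $M$ with spectral measure $\mu_M = \frac{1}{n}\sum_i \delta_{\lambda_i(M)}$, define the logarithmic potential $U_M(z) := -\int \log|z-w|\, d\mu_M(w) = -\frac{1}{n}\log|\det(M - zI)|$; it satisfies $\Delta U_M = -2\pi \mu_M$ in the distributional sense. Hypothesis (2) then says exactly that $U_{A_n/\sqrt n}(z) - U_{B_n/\sqrt n}(z) \to 0$ almost surely for Lebesgue-a.e.\ $z\in\C$, and a Fubini argument on the null set of exceptional pairs $(z,\omega)$ upgrades this to: on a full-probability event, the convergence holds simultaneously for Lebesgue-a.e.\ $z$.

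First I would use hypothesis (1) to establish almost sure tightness. Interpreting $\norm{\cdot}$ as the Hilbert--Schmidt (Frobenius) norm, Schur's majorization gives $\sum_i |\lambda_i(M)|^2 \leq \norm{M}^2$, hence $\int|z|^2\, d\mu_{A_n/\sqrt n}(z) \leq \tfrac{1}{n^2}\norm{A_n}^2$, which is a.s.\ bounded; symmetrically for $B_n$. Thus, a.s., there is a (random) $R>0$ such that both empirical spectral measures are carried on the disk $D_R\subset\C$ for all $n$, which gives a.s.\ tightness by Markov. For any test function $\varphi\in C_c^\infty(\C)$, integration by parts (valid since $U_M\in L^1_{loc}$ and $\varphi$ is compactly supported) then yields
\[ \int \varphi \, d(\mu_{A_n/\sqrt n} - \mu_{B_n/\sqrt n}) = -\frac{1}{2\pi} \int_{\C} \Delta\varphi(z)\,\bigl[U_{A_n/\sqrt n}(z) - U_{B_n/\sqrt n}(z)\bigr]\, dm(z), \]
with $dm$ Lebesgue measure. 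The bracketed integrand vanishes pointwise a.e.\ in $z$ on a full-probability event, and the task reduces to passing the limit under the integral over $K := \supp(\Delta\varphi)$.

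The main obstacle is exhibiting a uniform-in-$n$, a.s.\ integrable majorant for $|U_{A_n/\sqrt n}|$ and $|U_{B_n/\sqrt n}|$ on the compact set $K$. The upper envelope is immediate once Step~1 confines the eigenvalues to $D_R$: $\log|\lambda_i - z|$ is uniformly bounded above for $z\in K$. The lower tail is the subtle point because eigenvalues might cluster near points of $K$, but since $\log|\cdot|$ is locally $L^p$-integrable for every finite $p$, Fubini yields
\[ \int_K |U_{M_n/\sqrt n}(z)|^p\, dm(z) \leq \frac{1}{n}\sum_i \int_K \bigl|\log|\lambda_i/\sqrt n - z|\bigr|^p\, dm(z) \leq C_{K,p,R}, \]
uniformly in $n$ on the a.s.\ event where the eigenvalues are confined to $D_R$. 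Taking any $p > 1$ and invoking de~la~Vall\'ee~Poussin, the family of potentials is uniformly integrable on $K$; combined with the pointwise a.e.\ vanishing this forces the right-hand side of the integration-by-parts display to tend to zero a.s. Hence $\int \varphi\, d(\mu_{A_n/\sqrt n} - \mu_{B_n/\sqrt n}) \to 0$ a.s.\ for every $\varphi \in C_c^\infty(\C)$, which combined with a.s.\ tightness yields a.s.\ weak convergence of the signed difference $\mu_{A_n/\sqrt n} - \mu_{B_n/\sqrt n}$ to zero.
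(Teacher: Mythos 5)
The paper never proves this statement: it is imported as a black box from Tao and Vu (Theorem 2.1 of \cite{Tao10}), so your proposal can only be compared with the original argument --- which, in fact, it mirrors closely: Tao and Vu likewise pass to logarithmic potentials, test the identity $\Delta\log|\cdot|=2\pi\delta_0$ against $C_c^\infty$ functions, and conclude from pointwise a.e.\ convergence of the potentials together with a uniform integrability bound extracted from the Frobenius-norm hypothesis, with tightness handling the passage from compactly supported to bounded continuous test functions.

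There is, however, a genuine gap in your write-up. Hypothesis (1) only bounds the second moment $\int|z|^2\,d\mu_{A_n/\sqrt n}$; it does \emph{not} produce a (random) $R$ such that all eigenvalues of $\frac{1}{\sqrt n}A_n$ lie in a fixed disk $D_R$ for every $n$. For instance $A_n=n\,e_1e_1^*$ has $\frac{1}{n^2}\norm{A_n}^2=1$ while $\frac{1}{\sqrt n}A_n$ has an eigenvalue equal to $\sqrt n$; in general the second-moment bound permits eigenvalues of modulus up to order $\sqrt n$. Both places where you invoke confinement --- the assertion that the upper envelope of $\log|\lambda_i/\sqrt n-z|$ for $z\in K$ is ``immediate,'' and the constant $C_{K,p,R}$ in the uniform $L^p$ bound --- therefore do not follow as stated. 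The gap is repairable with tools already in your argument: split each term at $\{|z-\lambda_i/\sqrt n|\le 1\}$; the near-singularity part is fine exactly as you say, since $|\log|w||^p$ is locally integrable in the plane uniformly in the center, while on the far part one uses $\bigl(\log(|\lambda_i/\sqrt n|+R_K)\bigr)^p\le C_{p,K}\bigl(1+|\lambda_i/\sqrt n|^2\bigr)$, so that averaging over $i$ gives a bound by $C_{p,K}\bigl(1+\frac{1}{n^2}\norm{A_n}^2\bigr)$, which is almost surely bounded by hypothesis (1); tightness already follows from the second moment via Markov, so the fixed disk is never needed. (Two minor points: the inequality you attribute to Fubini is Jensen applied to $t\mapsto t^p$, Fubini only entering when you integrate over $K$; and the Fubini step upgrading hypothesis (2) to a single full-probability event should note the joint measurability in $(z,\omega)$.) With these corrections your argument is complete and is essentially the Tao--Vu proof.
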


To use the replacement principle, note that if the rows of random matrices are distributed according to the multivariate Gaussian random variables with independent coordinates having mean zero and finite second moment, then the $\norm{\nu_1^{(n)},\dots,\nu_n^{(n)}}_{d, \delta}$ is bounded and assumptions A1-A3 are satisifed. As matrices of this kind are known to have spectral distribution converging to the uniform distribution on the unit disk in $\C$, we need only show that the replacement principle holds for matrices of the kind described in Theorem \ref{thm:weakly}. \\

We first verify the second condition in Theorem \ref{thm:tau-vu}. We wish to show that for any $z \in \C$, with probability one, 
$$ \frac{1}{n}\log|\det(\frac{1}{\sqrt{n}}A_n - zI)| - \frac{1}{n}\log|\det(\frac{1}{\sqrt{n}}B_n - zI)| \rightarrow 0$$
Denote the rows of $\frac{1}{\sqrt{n}}A_n - zI$ by $Z_1, Z_2, ..., Z_n$ and the rows of $\frac{1}{\sqrt{n}}B_n - zI$ by $Y_1, Y_2, ..., Y_n$. Denote by $V_i$ the span of $Z_1, Z_2, ..., Z_{i-1}$ and by $U_i$ the span of $Y_1, Y_2, ..., Y_{i-1}$. We then have that
$$ \frac{1}{n}\log|\det(\frac{1}{\sqrt{n}}A_n - zI)| = \frac{1}{n} \sum_{i=1}^n \log\dist(Z_i, V_i)$$
and 
$$ \frac{1}{n}\log|\det(\frac{1}{\sqrt{n}}B_n - zI)| = \frac{1}{n} \sum_{i=1}^n \log\dist(Y_i, U_i)$$
and we wish to show that
$$ \frac{1}{n} \sum_{i=1}^n \log\dist(Z_i, V_i) - \frac{1}{n} \sum_{i=1}^n \log\dist(Y_i, U_i) \rightarrow 0 $$

Now, recall the following identity.

\begin{lemma}\label{lem:dist}
Let $1\leq k \leq n$ and $M$ be a full rank $k \times n$ matrix with singular values $\sigma_1(M) \geq ... \geq \sigma_k(M) > 0$ and rows $X_1, ..., X_k \in \C^n$. For each $1 \leq i \leq k$, let $W_i$ be the hyperplane generated by the $k-1$ vectors $X_1, ..., X_{i-1}, X_{i+1}, ..., X_k$. Then, 
$$ \sum_{j=1}^k \sigma_j(M)^{-2} = \sum_{j=1}^k \dist(X_j, W_j)^{-2}$$
\end{lemma}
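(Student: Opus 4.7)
The plan is to reduce the identity to a trace computation on the Gram matrix $G=MM^{\ast}$ and then evaluate its diagonal entries via Cramer's rule, interpreting the resulting minor ratios as squared distances through the standard base-times-height formula for parallelepipeds.

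First I would note that the squared singular values of $M$ are exactly the eigenvalues of the $k\times k$ Gram matrix $G=MM^\ast$, whose entries are $G_{ij}=\langle X_i,X_j\rangle$. Since $M$ has full rank, $G$ is positive definite and invertible, so
\[
\sum_{j=1}^k \sigma_j(M)^{-2} \;=\; \sum_{j=1}^k \lambda_j(G)^{-1} \;=\; \tr\bigl(G^{-1}\bigr) \;=\; \sum_{i=1}^k (G^{-1})_{ii}.
\]
This reduces the lemma to showing that $(G^{-1})_{ii}=\dist(X_i,W_i)^{-2}$ for each $i$.

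Next I would apply Cramer's rule: writing $G_{\hat\imath\hat\imath}$ for the $(k-1)\times(k-1)$ principal minor of $G$ obtained by deleting row and column $i$,
\[
(G^{-1})_{ii} \;=\; \frac{\det(G_{\hat\imath\hat\imath})}{\det(G)}.
\]
Observe that $G_{\hat\imath\hat\imath}=M_{\hat\imath}M_{\hat\imath}^\ast$, where $M_{\hat\imath}$ is the matrix with $X_i$ removed. By the Gram determinant formula, $\det(G)$ equals the squared $k$-volume $V_k^2$ of the parallelepiped spanned by $X_1,\dots,X_k$, and $\det(G_{\hat\imath\hat\imath})$ equals the squared $(k-1)$-volume $V_{k-1,i}^2$ of the face spanned by the remaining vectors. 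The base-times-height formula then yields
\[
V_k \;=\; V_{k-1,i}\cdot \dist(X_i,W_i),
\]
which upon squaring gives $\det(G)/\det(G_{\hat\imath\hat\imath})=\dist(X_i,W_i)^2$, so $(G^{-1})_{ii}=\dist(X_i,W_i)^{-2}$. Summing over $i$ and combining with the trace identity from the first step finishes the proof.

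There is no real obstacle: the only place a reader might pause is the identification $\det(MM^\ast)/\det(M_{\hat\imath}M_{\hat\imath}^\ast)=\dist(X_i,W_i)^2$, which is just the Gram-matrix version of the base-times-height formula and follows, for example, by decomposing $X_i=\proj_{W_i}(X_i)+(X_i-\proj_{W_i}(X_i))$ and using multilinearity of the determinant in the row $X_i$ of $M$, since the projected component contributes zero.
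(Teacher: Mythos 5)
Your proof is correct. Note that the paper itself gives no proof of this lemma — it is introduced with the phrase ``recall the following identity'' and cited as a known fact from the Tao--Vu circular-law literature (it is their negative second moment identity), so there is nothing in the paper to compare against. Your argument is the standard one: reduce $\sum_j \sigma_j(M)^{-2}$ to $\tr\bigl((MM^*)^{-1}\bigr)$, express the diagonal entries of $(MM^*)^{-1}$ as ratios of Gram determinants via Cramer's rule, and identify the ratio $\det(G)/\det(G_{\hat\imath\hat\imath})$ with $\dist(X_i,W_i)^2$ through the base-times-height interpretation of Gram volumes. The full-rank hypothesis is used exactly where you invoke it, to ensure $G=MM^*$ is invertible, and your closing remark about multilinearity of the determinant in the row $X_i$ correctly supplies the only step that is not completely mechanical. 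This is a complete and well-organized proof.
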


By Theorem 2.15 and the Borel-Cantelli lemma, we have that 
$$ \log\dist(Z_i, V_i), \log\dist(Y_i, U_i) \geq -C_1 \log n$$
with probability one for some constant $C_1>0$. Furthermore, by A3 (or from Proposition \ref{prop:tail} proved independently below), we have that with probability one there exists a constant $C_2>0$ such that for large $n$ and $i \leq n$, 
$$ \log\dist(Z_i, V_i), \log\dist(Y_i, U_i) \leq C_2$$

Thus, to show the second condition of the replacement principle, it suffices to show that 
$$ \frac{1}{n} \sum_{i=1}^{n - n^{.99}} \log\dist(Z_i, V_i) - \frac{1}{n} \sum_{i=1}^{n-n^{.99}} \log\dist(Y_i, U_i) \rightarrow 0 $$

Following Tao and Vu in \cite{Tao10}, we show this with the following two lemmas.

\begin{lemma} (High-dimensional contribution)
There exists a constant $C$, such that for every $\epsilon \in (0, 1/4)$ and every $\delta \in (0, \epsilon^2)$, with probability one, for sufficiently large $n$, 
$$  \frac{1}{n} \sum_{(1-\delta)n \leq i \leq n- n^{.99}} (|\log\dist(Z_i, V_i)| + |\log\dist(Y_i, U_i)|) \leq C\eps$$
\end{lemma}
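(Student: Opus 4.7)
The plan is to bound $|\log\dist(Z_i, V_i)|$ both from above and from below for $i$ in the range, where the codimension $d := n - i + 1$ of $V_i$ satisfies $n^{0.99} \le d \le \delta n + 1$. The intuition is that $\dist(Z_i, V_i)$ should be of order $\sqrt{d/n}$, so $|\log\dist(Z_i, V_i)| \approx \tfrac12|\log(d/n)|$, and summing over the range produces a quantity of order $\int_0^\delta |\log u|\, du = O(\delta|\log\delta|)$, which is $O(\eps)$ when $\delta \le \eps^2$ and $\eps < 1/4$.

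For the positive part I would use $(\log\dist(Z_i, V_i))_+ \le (\log\|Z_i\|)_+$ together with $\|Z_i\| \le \|X_i^{(n)}\|/\sqrt n + |z|$. Assumption A3 combined with the moment bounds from A1 (via Markov/Chebyshev and Borel--Cantelli) implies that $\|X_i^{(n)}\|/\sqrt n \le 2$ for all but $o(n/\log n)$ indices $i$ almost surely. The exceptional indices contribute $o(1)$ to the normalized sum (using the crude $\log n$ bound from Theorem~\ref{thm:leastsing}), while the remaining indices contribute at most $C_z \cdot \delta = O(\eps)$.

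For the negative part I would invoke the Strong Small Eigenvalue Lemma (Lemma~\ref{lem:proj}) applied to $Z_i$ with the random subspace $V_i = \mathrm{span}(Z_1,\dots,Z_{i-1})$, which yields
\[
\P(\dist(Z_i, V_i) \le t) \le \omega_d t^d \|f_{Z_i}\|_{d,t} \le C\omega_d t^d,
\]
where the final inequality uses the uniform bound on $\|\nu_1^{(n)},\dots,\nu_n^{(n)}\|_{d,t}$. One subtlety: the hypothesis controls the (unshifted) row distribution $\nu_i^{(n)}$, whereas the lemma needs to be applied to the distribution of the scaled-and-shifted $Z_i = X_i^{(n)}/\sqrt n - z e_i$. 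I would handle this via the decomposition $\proj_{V_i^\perp}(Z_i) = \proj_{V_i^\perp}(X_i^{(n)})/\sqrt n - z\,\proj_{V_i^\perp}(e_i)$ together with the reverse triangle inequality, absorbing the shift into a slightly enlarged threshold (noting that $\|\proj_{V_i^\perp}(e_i)\|$ is of the expected order $\sqrt{d/n}$ for a typical $(n-d)$-dimensional $V_i$). Setting $t = c\sqrt{d/n}$ for a small absolute constant $c$, the bad-event probability is at most $C\omega_d c^d (d/n)^{d/2}$, which decays super-exponentially in $d$ since $\omega_d = \pi^d/\Gamma(d+1)$ does and $d \ge n^{0.99}$. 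A union bound over the $O(\delta n)$ indices in the range followed by Borel--Cantelli yields $\dist(Z_i, V_i) \ge c\sqrt{d/n}$ for all such $i$, almost surely, giving $|\log\dist(Z_i, V_i)| \le \tfrac12|\log(d/n)| + O(1)$.

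Summing with $d = n - i + 1$,
\[
\frac1n \sum_{d=n^{0.99}}^{\delta n+1}\left(\tfrac12|\log(d/n)| + O(1)\right) \approx \tfrac12\int_0^\delta |\log u|\, du + O(\delta) = \tfrac{\delta}{2}(1+|\log\delta|) + O(\delta),
\]
which is at most $C\eps$ for $\delta \le \eps^2$ and $\eps < 1/4$, since $\eps|\log\eps|$ is bounded on that interval. The analysis for the $Y_i, U_i$ terms is identical, using that a Gaussian ensemble satisfies A1--A3 and has uniformly bounded $\|\cdot\|_{d,\delta}$. I expect the main obstacle to be the bookkeeping needed to pass from the hypothesis on $\nu_i^{(n)}$ to the effective bound for $Z_i$: the norm $\|\cdot\|_{W,\delta,1}$ is not invariant under shifts with a component in $W^\perp$, so the reverse-triangle decomposition above must be carried out carefully, though no essentially new ingredient is required.
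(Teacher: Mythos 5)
Your proof follows essentially the same route as the paper: the positive part of the logarithm is handled by the almost-sure upper bound on $\dist(Z_i,V_i)\le\norm{Z_i}$ coming from A1/A3, and the negative part by the lower tail bound $\P(\dist(Z_i,V_i)\le c\sqrt{d/n})=O(\omega_d c^d (d/n)^{d/2})$ derived from the Strong Small Eigenvalue Lemma and the uniform bound on $\norm{\nu_1^{(n)},\dots,\nu_n^{(n)}}_{d,\delta}$, followed by Borel--Cantelli and the summation $\frac1n\sum_d|\log(d/n)|=O(\delta|\log\delta|)=O(\eps)$. Your explicit treatment of the shift by $zI$ (the hypothesis controls the law of $X_i^{(n)}$, not of $Z_i$, and $V_i$ is spanned by shifted rows) is a point the paper's Proposition \ref{prop:tail} passes over silently, so that extra care is welcome rather than a deviation.
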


\begin{proof}
We only consider the first part of the sum involving the $Z_i's$, since the argument for the $Y_i$ terms is identical. We consider the positive and negative components of the logarithm separately. For the positive component, as $ \max(\log\dist(Z_i, V_i), 0)$ is bounded by the constant $C_2$, as mentioned earlier, we have with probability 1, for $\delta < \epsilon$,
$$  \frac{1}{n} \sum_{(1-\delta)n \leq i \leq n- n^{.99}} \max(\log\dist(Z_i, V_i), 0) \leq C_2\epsilon $$

To deal with the negative component of the logarithm, by the Borel-Cantelli lemma, it suffices to show
$$ \sum_{i=1}^{\infty} \P\left(\frac{1}{n} \sum_{(1-\delta)n \leq i \leq n- n^{.99}} \max(-\log\dist(Z_i, V_i), 0) \geq \epsilon\right) < \infty$$

To show this, we use the prove the following proposition.

\begin{proposition}[Lower tail bound]\label{prop:tail}
Let $1 \leq d \leq n$ and $c>0$, and let $V$ be a random d-dimensional subspace of $\C^n$. Let $X$ be a row of $A_n$. Then,
$$ \P(\dist(\sqrt{n}X, V) \leq c\sqrt{n-d}) = O(c^d\omega_d (1-\frac{d}{n})^{\frac{d}2})$$
where the constant depends on $c$. Moreover, the right hand side is less than $O((\frac{d}{2 e \pi c^2})^{-\frac{d}2})$.
\end{proposition}
\begin{proof}

Recall the definition of $\norm{f_{X_1},\dots,f_{X_n}}_{d,\delta}$ and the (non-measure preserving) homeomorphism given by $V\mapsto V^\perp$. Then we have,
\begin{align*}
\P_{X,V\in \Gr_{n,d}}(\dist(\sqrt{n}X, V) \leq c \sqrt{n-d} ) &\leq \P_{X,W\in \Gr_{n,n-d}}( \rVert \proj_W(\sqrt{n}X)\rVert \leq c \sqrt{n-d}) \\
&\leq \P_{X,W\in \Gr_{n,n-d}}\left( \rVert \proj_W(X)\rVert \leq c \sqrt{1-\frac{d}{n}}\right) \\
&\leq \omega_d \left(c \sqrt{1-\frac{d}{n}}\right)^{d} \norm{f_{X_1},\dots,f_{X_n}}_{d, c \sqrt{1-\frac{d}{n}}}\\
\end{align*}
The last inequality follows from Lemma \ref{lem:proj}. 

Since, $\norm{f_{X_1},\dots,f_{X_n}}_{d, \delta}<C$ for all $\delta>0$, we obtain the bound $O(\omega_d c^d(1-\frac{d}{n})^{\frac{d}2})$.

The last statement follows from the definition of $\omega_d$ followed by an application of Sterling's estimate.
\end{proof}

Now, since with probability one, $V_i$ is of dimension $i-1$, and $Z_i$ and $V_i$ are independent of each other, the proposition implies that 
$$ \dist(Z_i, V_i) \geq \sqrt{n-i +1}$$
for each $(1 - \delta)n \leq i \leq n-n^{.99}$, with probability $1- O(n^{-10})$, say. Setting $\delta$ sufficiently small, compared to $\epsilon$, taking logarithms and summing in $i$ and $n$, one obtains the result. 
\end{proof}

\begin{lemma} (Low-dimensional contribution)
There exists a constant $C$, such that for every $\epsilon \in (0, 1/4)$ and every $\delta_0 > 0$ such that for every $\delta \in (0, \delta_0)$, with probability one, for sufficiently large $n$, 
$$  \Bigl\lvert \frac{1}{n} \sum_{1 \leq i \leq(1-\delta)n} (\log\dist(Z_i, V_i) - \log\dist(Y_i, U_i)) \Bigl\rvert \leq C\epsilon$$
\end{lemma}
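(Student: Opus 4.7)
The plan is to subtract off the pieces already controlled elsewhere, reducing the low-dimensional difference to a comparison of the full normalized log-determinants, which I would then handle via the generalized Mar\v{c}enko--Pastur law from Corollary \ref{cor:Lweakly}.

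First, the base-times-height identity $\sum_{i=1}^n \log\dist(Z_i, V_i) = \log|\det(\tfrac{1}{\sqrt n}A_n - zI)|$ (and its $Y_i, U_i$ analogue) lets one rewrite
\begin{align*}
\frac{1}{n}\sum_{1\leq i\leq (1-\delta)n} \log\dist(Z_i, V_i)
&= \frac{1}{n}\log\left|\det\left(\tfrac{1}{\sqrt n}A_n - zI\right)\right| \\
&\quad - \frac{1}{n}\sum_{(1-\delta)n < i \leq n-n^{.99}}\log\dist(Z_i, V_i) \\
&\quad - \frac{1}{n}\sum_{n-n^{.99} < i \leq n}\log\dist(Z_i, V_i),
\end{align*}
and similarly for the $Y_i, U_i$. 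The third sum is $O(n^{-.01}\log n) = o(1)$: each $|\log\dist(Z_i, V_i)|$ is at most $C_1\log n$ (by Theorem \ref{thm:leastsing} from below and the constant $C_2$ bound from above already established in this section), and there are only $n^{.99}$ terms. The middle sum has absolute value at most $C\epsilon$ by the high-dimensional contribution lemma, which already bounds the sum of absolute values for both $Z$ and $Y$ together. Subtracting the analogous identity for $Y_i, U_i$, the lemma reduces to showing
\[
\frac{1}{n}\log\left|\det\left(\tfrac{1}{\sqrt n}A_n - zI\right)\right| - \frac{1}{n}\log\left|\det\left(\tfrac{1}{\sqrt n}B_n - zI\right)\right| \longrightarrow 0 \quad \text{a.s.}
\]

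Next, each side equals $\tfrac{1}{2}\int_0^\infty \log(x)\, dL_n(z)(x)$, where $L_n(z)$ is the spectral measure of $H_n(z)$ for the corresponding matrix. By Corollary \ref{cor:Lweakly}, both measures converge weakly, almost surely, to the \emph{same} deterministic $\mu_z$. To promote weak convergence to convergence of the $\log$-integral, I would split $[0,\infty)$ into $[0,\eta] \cup [\eta, M] \cup [M,\infty)$: the interval $[M,\infty)$ is vacuous once $M$ is an almost sure uniform upper bound on $\|H_n(z)\|$ (available from condition (1) of Theorem \ref{thm:tau-vu} and assumption A1), and on the bounded middle interval $\log$ is continuous and bounded, so weak convergence directly yields that the corresponding difference integral vanishes as $n\to\infty$.

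The main obstacle is the lower tail $\int_0^\eta |\log(x)|\, dL_n(x)$ as $\eta \to 0$, i.e., uniform integrability of $\log x$ at the origin against both spectral measures. My plan is to bootstrap this from estimates already in hand: via the identity $\sum_j \log\sigma_j(W_n) = \sum_i \log\dist(Z_i, V_i)$, the small singular values correspond to the large-$i$ distance terms controlled by the high-dimensional contribution lemma ($\leq C\epsilon$) and by Theorem \ref{thm:leastsing} ($o(1)$), while Lemma \ref{lem:dist} ($\sum_j \sigma_j^{-2} = \sum_i \dist(X_i, W_i)^{-2}$) additionally bounds the count of singular values below any given threshold. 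Integration by parts against $\log$ should then yield $\int_0^\eta |\log(x)|\,dL_n(x) \leq O(\epsilon)$ uniformly in $n$, for $\eta = \eta(\epsilon)$ sufficiently small. Combining the three pieces gives the required bound for the difference of log-determinants, and hence the desired $C\epsilon$ estimate on the low-dimensional contribution. I expect this transfer from the distance-based high-dim bound to a singular-value-based tail bound for $L_n$ to be the most delicate step.
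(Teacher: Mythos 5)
Your proposal inverts the logic of the section and in doing so becomes essentially circular. The full log-determinant comparison
\[
\frac{1}{n}\log\Bigl|\det\Bigl(\tfrac{1}{\sqrt n}A_n - zI\Bigr)\Bigr| - \frac{1}{n}\log\Bigl|\det\Bigl(\tfrac{1}{\sqrt n}B_n - zI\Bigr)\Bigr| \to 0
\]
is condition (2) of the replacement principle, i.e.\ the target that the high- and low-dimensional lemmas are jointly designed to establish. Reducing the low-dimensional lemma to that statement is only progress if you can prove the full comparison by some independent route, and the route you sketch (weak convergence of $L_n(z)$ on a fixed compact interval plus uniform log-integrability at the origin) fails precisely at the step you flag as delicate. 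For the \emph{square} matrix $W_n=\tfrac{1}{\sqrt n}A_n-zI$, the negative second moment identity of Lemma \ref{lem:dist} expresses $\sum_j\sigma_j^{-2}$ in terms of distances from each row to the hyperplane spanned by the other $n-1$ rows; those distances are only guaranteed to exceed roughly $n^{-5/2}$ (Theorem \ref{thm:leastsing}), so the identity yields a bound on $\sum_j\sigma_j^{-2}$ that is polynomially large in $n$. That controls the count of singular values below a threshold $t$ only up to $t^2 n^{O(1)}$, which gives nothing for $\int_0^\eta|\log x|\,dL_n$ with $\eta$ fixed; and shrinking $\eta$ with $n$ destroys the weak-convergence argument on the middle interval. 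There is also no term-by-term correspondence between small singular values of $W_n$ and large-$i$ distance terms, so the high-dimensional lemma cannot be transferred to a spectral tail bound for $L_n$ as you suggest.

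The paper avoids all of this by never leaving the rectangular setting in the dangerous region. It writes the truncated sum as $\frac{1}{n}\sum_i\log\sigma_i(A_{nn'})$ for the $n'\times n$ matrix $A_{nn'}$ with $n'=\lfloor(1-\delta)n\rfloor$, and splits the spectral integral of $\log$ into four ranges. Only in the intermediate range does it pass (via Cauchy interlacing and a smooth cutoff) to the square-matrix spectral measures, where Corollary \ref{cor:Lweakly} applies. In the small-$t$ ranges it stays with $A_{nn'}$: there each row sits at distance at least $c\sqrt{\delta n}$ from the span of the remaining $n'-1$ rows (Proposition \ref{prop:tail} with codimension about $\delta n$), so Lemma \ref{lem:dist} gives $\frac{1}{n}\sum_i(\sigma_i(A_{nn'})/\sqrt n)^{-2}=O_\delta(1)$ and, after interlacing across $n''$, the refined bound $\sigma_i(A_{nn'})/\sqrt n\geq c(n'-i)/n$. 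That aspect-ratio gap is exactly what your approach discards by working with the full square matrix, and without it the lower spectral tail cannot be controlled by the tools available in the paper. I recommend you adopt the rectangular truncation rather than attempting to prove the full log-determinant comparison directly.
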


\begin{proof}
Let $n^{\prime} = \lfloor (1 - \delta) \rfloor$ and let $A_{nn^{\prime}}$ be the matrix with rows $\sqrt{n}Z_1, ..., \sqrt{n}Z_{n^{\prime}}$, and let $B_{nn^{\prime}}$ be the matrix with rows $\sqrt{n}Y_1, ..., \sqrt{n}Y_{n^{\prime}}$. Expressing the determinant as product of singular values, we have that
$$ \frac{1}{n} \sum_{1 \leq i \leq(1-\delta)n} \log\dist(Z_i, V_i) = \frac{1}{n} \sum_{i=1}^{n^{\prime}} \log(\sigma_i(A_{nn^{\prime}})) $$
and similarly for $Y_i$, $U_i$, and $B_{nn^{\prime}}$. Thus, it suffices to show that 
$$ \frac{1}{n^{\prime}} \sum_{i=1}^{n^{\prime}} \log(\sigma_i(A_{nn^{\prime}})) - \log(\sigma_i(B_{nn^{\prime}})) = O(\epsilon) $$
for all but finitely many $n$. 
Which amounts to showing that 
$$ \int_0^{\infty} \log t d\nu_{nn^{\prime}}(t) = O(\epsilon) $$
for all but finitely many $n$, where $d\nu_{nn^{\prime}} = \mu_{\frac{1}{n^{\prime}}A_{nn^{\prime}}A_{nn^{\prime}}^*} - \mu_{\frac{1}{n^{\prime}}A_{nn^{\prime}}A_{nn^{\prime}}^*}$. We show this by dividing the region of $t$ into parts.\\

\vspace{1pc}
(1) The region of very large $t$:\\
Note that 
\begin{align*}
\int_0^{\infty}  t |d\nu_{nn^{\prime}}(t)| &\leq \frac{1}{n^{\prime}} \sum_{i=1}^{n^{\prime}} (\frac{1}{n} \sigma_i(A_{nn^{\prime}})^2 + \frac{1}{n} \sigma_i(B_{nn^{\prime}})^2) \\
&= \frac{1}{n^{\prime}} \sum_{i=1}^{n^{\prime}} (|Z_i|^2 + |Y_i|^2) < C
\end{align*}
for some $C$, with probability 1, for sufficiently large $n$ and all $i$, by A3. Thus, we have that
$$ \int_{R_{\epsilon}}^{\infty} |\log t| |d\nu_{nn^{\prime}}(t)| \leq \epsilon$$
for all but finitely many $n$ and some $R_{\epsilon}$ depending only on $\epsilon$.
\vspace{1pc}

(2) The region of intermediate $t$:\\
Consider the region $t \in [\epsilon^4, R_{\epsilon}]$. First, recall the Cauchy Interlacing Property

\begin{lemma}[Cauchy Interlacing Property]\label{lem:Cauchy}
 Let $A$ be a $n\times n$ matrix with complex entries and let $A^{\prime}$ be the submatrix formed by the first $n-k$ rows. Let $\sigma_1(A), ..., \sigma_n(A)$ denote the singular values of $A$, and similarly for $A^{\prime}$. Then,
 $$ \sigma_i(A) \geq \sigma_i(A^{\prime}) \geq \sigma_{i+k}(A)$$
 for every $1 \leq i \leq n-k$
\end{lemma}

Let $\Psi$ be a smooth function which equals 1 on $[\epsilon^4, R_{\epsilon}]$ and is supported on $[\epsilon^4/2, 2R_{\epsilon}]$. Then, using Lemma \ref{lem:Cauchy}, we have
$$ \int_0^{\infty} \Psi(t) \log t d\nu_{nn^{\prime}}(t) = \int_0^{\infty} \Psi(t) \log t d\nu_{nn}(t) + O(\epsilon) $$
if $\delta$ is sufficiently small, depending on $\epsilon$ and $\Psi$.
By Corollary \ref{cor:Lweakly}, we have that $\mu_{\frac{1}{n}A_nA_n^{*}}$ and $\mu_{\frac{1}{n}B_{n}B_{n}^{*}}$ converge to the same limit, and thus $nu_{nn}$ converges to zero. Thus,
$$ \int_{\epsilon^4}^{R_\epsilon} \log t d\nu_{nn^{\prime}}(t) \leq \int_0^{\infty} \Psi(t) \log t d\nu_{nn^{\prime}}(t) = O(\epsilon) $$
\vspace{1pc}

(3) The region of moderately small $t$: \\
Consider the region $t \in [\delta^2, \epsilon^4]$. We wish to show that 
$$ \int_{\delta^2}^{\epsilon^4} |\log t||d\nu_{nn^{\prime}}(t)| = O(\epsilon)$$
By the triangle inequality and symmetry, it suffices to show that, with probability 1, one has 
$$ \int_{\delta^2}^{\epsilon^4} |\log t||d\mu_{\frac{1}{n^{\prime}} A_{nn^{\prime}}A_{nn^{\prime}}^{*}}(t)| = O(\epsilon)$$
for all but finitely many $n$. We can express the left hand side as
$$ \frac{1}{n} \sum_{i=1}^{n^{\prime}} f(\frac{1}{\sqrt{n}} \sigma_i(A_{nn^{\prime}})) $$

where $f(t) = |\log t| \mbb{1}_{(\delta^2 \leq t^2 \leq \epsilon^4)}$.  \\

As $f$ is less than $|\log\delta|$, if $\delta < \epsilon^2$, we may make the contribution for $i \geq (1 - 2\delta)n$ acceptable. Thus, it suffices to show that we have, almost surely,
$$ \frac{1}{n} \sum_{1 \leq i \leq (1-2\delta)n} f(\frac{1}{\sqrt{n}} \sigma_i(A_{nn^{\prime}})) = O(\epsilon)$$
for all but finitely many $n$. \\

Recall that $n^{\prime} = \lfloor (1-\delta)n \rfloor$. For any $0<c<1$, by Proposition \ref{prop:tail} and the Borel-Cantelli lemma, we have with probability 1 that
$$ \dist(Z_i, span(Z_1, ..., Z_{i-1}, Z_{i+1}, ... , Z_{n^{\prime}})) \geq c\sqrt{n-n^{\prime}} = c\sqrt{\delta n} $$ 
for all but finitely many $n$ and all $1 \leq i \leq n^{\prime}$. Thus,
$ \frac{1}{\sqrt{n}} \sigma_i(A_{n, n^{\prime}}) \geq c\sqrt{\delta} $, so that 
$$ \frac{1}{n} \sum_{i=1}^{n^{\prime}} (\frac{1}{\sqrt{n}} \sigma_i(A_{nn^{\prime}})) ^{-2} \leq \frac{n^{\prime}}{n} \frac{c}{\delta} = O_{\delta}(1)  $$ 
Moreover, as $n^{\prime} = \lfloor (1-\delta)n \rfloor$ and $\sigma_i(A_{n, n^{\prime}})$ is decreasing in $i$, one has that $\frac{1}{\sqrt{n}} \sigma_{\lfloor (1-2\delta)n \rfloor} (A_{n, n^{\prime}}) \geq c\sqrt{\delta} $.\\

Now, using Proposition \ref{prop:tail} and the Borel-Cantelli lemma again, we have with probability one, 
$$ \dist(Z_i, span(Z_1, ..., Z_{i-1}, Z_{i+1}, ... , Z_{n^{\prime\prime}})) \geq c\sqrt{n-n^{\prime\prime}} $$ for all but finitely many $n$, all $1 \leq i \leq n^{\prime\prime}$ and $\frac{n}{2} \leq n^{\prime\prime} \leq n^{\prime}$. Thus, 
$$ (\frac{1}{\sqrt{n}} \sigma_i(A_{n, n^{\prime\prime}}) )^{-2} \leq \frac{cn}{n-n^{\prime\prime}} $$ so that we have almost surely that
$$ \frac{1}{n} \sum_{i=1}^{n^{\prime\prime}} (\frac{1}{\sqrt{n}} \sigma_i(A_{n, n^{\prime\prime}}) )^{-2} = O(\frac{n}{n-n^{\prime\prime}}) $$ for all but finitely many $n$ and all $\frac{n}{2} \leq n^{\prime\prime} \leq n^{\prime}$.

Using the last $n - n^{\prime\prime}$ terms in the sum on the left hand side below, we get that 
$$  \sum_{i=1}^{n^{\prime\prime}} (\frac{1}{\sqrt{n}} \sigma_i(A_{nn^{\prime\prime}})) ^{-2}  \geq (n-n^{\prime\prime}) (\frac{1}{\sqrt{n}} \sigma_{2n^{\prime\prime} - n}(A_{nn^{\prime\prime}})) ^{-2} $$ we can conclude that 
$$ (\frac{1}{\sqrt{n}} \sigma_{2n^{\prime\prime} - n}(A_{nn^{\prime\prime}})) \geq c^{\prime} \frac{n - n^{\prime\prime}}{n}$$
for all but finitely many $n$ and $n/2 \leq n^{\prime\prime} \leq n^{\prime}$, for some constant $c^{\prime} > 0$. Using Lemma 4.7, we can conclude that 
\begin{equation}  \frac{1}{\sqrt{n}} \sigma_{i}(A_{nn^{\prime}}) \geq c \frac{n^{\prime} - i}{n} \tag{*}  \end{equation} 
for all but finitely many $n$ and all $1 \leq i \leq (1-2\delta)n$. \\

Now recall, 
$$ \frac{1}{n} \sum_{1 \leq i \leq (1-\delta)n} f(\frac{1}{\sqrt{n}} \sigma_i(A_{nn^{\prime}}))$$
By (*), we see that the only terms in this sum that do not vanish are those with for which $i = (1-O(\epsilon^2))n$. For such terms, using (*) and the fact $f(t) \leq -\log t$, we have that
$$ \frac{1}{n} \sum_{1 \leq i \leq (1-2\delta)n} f(\frac{1}{\sqrt{n}} \sigma_i(A_{nn^{\prime}})) = O(\epsilon)$$

\vspace{1pc}

(4) The region of small $t$:\\
Consider $t \leq \delta$. As in the region of moderately small $t$, we need only show that 
$$ \frac{1}{n} \sum_{i=1}^{n^{\prime}} g(\frac{1}{\sqrt{n}} \sigma_i(A_{nn^{\prime}})) = O(\epsilon)$$
for all but finitely many $n$, where $g(t) = |\log t| \mbb{1}_{(t^2 < \delta^2)}$. 

By Proposition \ref{prop:tail} and the Borel-Cantelli lemma, we that with probability 1, for large enough $n$ and all $i \leq n^{\prime}$,
$$ \dist(Z_i, span(Z_1, ..., Z_i, Z_{i+1}, ... , Z_{n^{\prime}})) \geq c\sqrt{\delta }. $$
Again, using Proposition \ref{prop:tail} and Lemma \ref{lem:dist}, we have that 
$$ \frac{1}{n^{\prime}} \sum_{i=1}^{n^{\prime}} ( (\frac{1}{\sqrt{n}} \sigma_i(A_{nn^{\prime}}))^{-2} = O(\epsilon).$$
If $\delta$ is small enough, we have $g(t) \leq \epsilon/t^2$, and the result follows.

\end{proof}

\vspace{5pc}

\newcommand{\etalchar}[1]{$^{#1}$}
\providecommand{\bysame}{\leavevmode\hbox to3em{\hrulefill}\thinspace}
\providecommand{\MR}{\relax\ifhmode\unskip\space\fi MR }
\providecommand{\MRhref}[2]{%
  \href{http://www.ams.org/mathscinet-getitem?mr=#1}{#2}
}
\providecommand{\href}[2]{#2}

\end{document}